\newtheorem{theorem}{Theorem}[section]
\newtheorem{lemma}[theorem]{Lemma}
\newtheorem{cor}[theorem]{Corollary}
\newtheorem{definitiontemp}[theorem]{Definition}
\newenvironment{definition}{\begin{definitiontemp}
\normalfont}{\end{definitiontemp}}
\theoremstyle{remark}
\newtheorem*{remark}{Remark}
\newcommand{\cp}{\mathbb{C}_p}
\newcommand{\qp}{\mathbb{Q}_p}
\newcommand{\bnm}[2]{\genfrac{(}{)}{0pt}{}{#1}{#2}}
\newcommand{\D}[2]{{}^{#1}\! D^{#2}}
\newcommand{\Ds}[2]{{}^{#1}\! Q^{#2}}
\newcommand{\Di}[2]{{}^{#1}\! P^{#2}}
\newcommand{\Dz}{\mathcal{D}}
\newcommand{\h}{\mathcal{H}}
\title{Frobenius map for quintic threefolds}
\author{I. Shapiro}
\begin{document}

\maketitle

\begin{abstract}
We calculate the matrix of the Frobenius map on the middle dimensional cohomology of the one parameter family that is related by mirror symmetry to the family of all quintic threefolds.
\end{abstract}

\section{Introduction}
Recently $p$-adic methods were used to prove certain integrality results in the theory of topological strings (see \cite{ksv, sv1, sv2, vologodsky}). Namely, instanton numbers (i.e. genus zero Gopakumar-Vafa invariants) are defined in terms of the $A$-model and mirror symmetry allows us to express them in terms of the mirror $B$-model.  It follows from physical considerations that these numbers should be integers (they should coincide with the number of appropriate BPS states).  However integrality does not readily follow from either the $A$-model or the $B$-model interpretation.  The  integrality of instanton numbers and similar quantities (such as the number of holomorphic discs) were analyzed in \cite{ksv, sv1, sv2, vologodsky} by means of the $p$-adic $B$-model.  These numbers were expressed in terms of the Frobenius map on the middle dimensional $p$-adic cohomology; the integrality follows from this expression.

The main tool in the calculation of the Frobenius map is its relation with the Gauss-Manin connection.  This relation, however, does not determine the Frobenius map uniquely.\footnote{This is similar to the observation that a function is determined by a differential equation only up to boundary conditions.}  The additional  data required is the behavior of the Frobenius map at the boundary point of the moduli space (more specifically, the point of maximally unipotent monodromy that corresponds in the $B$-model to the infinite volume point in the $A$-model).  The analysis of the behavior of the Frobenius map at this point was carried out in \cite{vologodsky} using some very deep results in the theory of motives.  It was found that the matrix of the Frobenius operator (in a certain natural basis) at this point has at most one non-zero off-diagonal entry.\footnote{The diagonal entries range from $p^3$ down to $1$.}  This result was sufficient to prove the required integrality statements.

In the present paper we calculate the Frobenius matrix  at the boundary point of the moduli space, in the basic example of mirror symmetry, using the construction of the Frobenius map explained in \cite{homoint, local}.  This construction is equivalent to the original Dwork's construction of the Frobenius map.  Our calculations are very explicit and do not rely on any deep machinery from the theory of motives.  It follows from our computations that the natural conjecture that all the off-diagonal entries of the Frobenius matrix at the boundary point  are zero is false; in fact we obtain an explicit expression for the remaining undetermined entry and its non-vanishing can be verified by means of a computer calculation\footnote{The  calculations confirming non-vanishing have been performed by P. Dragon (using Mathematica) and the author (using PARI/GP).}.  More precisely, the remaining Frobenius matrix entry can be expressed as a $p$-adic series $$p^3\dfrac{24}{5^2}\left(\sum_{n= 3}^{\infty}\sum_{i=2}^{n-1}\sum_{j=1}^{i-1}\dfrac{B_n (n-1)!}{ij}-\dfrac{\left(\sum_{n\geq 1}B_n (n-1)!\right)^3}{6}\right)$$ in terms of the coefficients $B_n$ of the Dwork exponential $\sum B_n z^n:=\exp(z^p/p+z)$.

In fact it was pointed out to us by V. Vologodsky that from certain conjectures of the theory of motives one can derive that the above entry is a rational multiple of $\zeta_p(3)$, where the latter is a $p$-adic Riemann zeta value (see \cite{fur} for example).  We carried out (by computer) some calculations\footnote{We checked, to at least $10$ digits, the primes $3,5,7,11$ and $13$.  P. Dragon has independently confirmed these calculations.} that strongly point to the truth of the above.  Namely, the quantity in the parentheses above (later denoted by $\Delta_3$) is most naturally expressed in terms of the Kubota-Leopoldt $p$-adic $L$-function $L_p(s, \omega^{1-s})$ (where $\omega$ is the Teichm\"{u}ller character) that is related to the $p$-adic zeta values by $\zeta_p(s)=\frac{p^s}{p^s-1}L_p(s, \omega^{1-s})$ (see \cite{fur}).   Explicitly, it seems that $$\Delta_3=L_p(3, \omega^{1-3})/3$$ where we used the Dirichlet series expansion in \cite{del} to evaluate the latter. (This method of computing $p$-adic zeta values was suggested by A. Schwarz.)

\section{Preliminaries}

Recall that the $B$-model corresponding to the $A$-model on the quintic is a $1$-parameter family of mirror quintics defined as follows.  Consider the family given by the equations $$\lambda(x_0^5+x_1^5+x_2^5+x_3^5+x_4^5)+x_0 x_1 x_2 x_3 x_4=0$$ (let us denote these hypersurfaces by $V_\lambda$) inside the complex projective space $\mathbb{CP}^4$.  These should be factorized with respect to the symmetry group $\Gamma\cong (\mathbb{Z}/5\mathbb{Z})^3$. This group is realized as the quotient of the group of $5$-tuples of fifth roots of unity with product $1$ by the diagonal embedding of the fifth roots of unity.  The action is by multiplication of the coordinates by the corresponding roots of unity.  In principle one should then consider a resolution of the quotient; the resulting family $V^\circ_\lambda$ is referred to as the mirror quintics. It can be shown, however, (see for example \cite{coxkatz}) that for the purposes of computing the cohomology of $V^\circ_\lambda$ one may work with $\Gamma$ invariant elements in the cohomology of $V_\lambda$.  Observe that the permutation group on $5$ elements $\Sigma_5$ also acts on $V_\lambda$ by permutation of coordinates.  It is known that the $\Gamma$ invariant elements in the middle dimensional cohomology group of $V_\lambda$ are also $\Sigma_5$ invariant.

Note that the same constructions and statements remain true if one replaces $\mathbb{C}$ by $\mathbb{C}_p$ for $p\neq 5$, where $\cp$ denotes the completion of the algebraic closure of the
$p$-adic numbers $\qp$. Instanton numbers can be expressed in terms of the variation of Hodge structure on the $1$-dimensional family of mirror quintics.  The analysis of integrality of these instanton numbers performed in \cite{ksv} is based on the consideration of the Frobenius map.  If $p\neq 5$ then we may consider the invariant elements in the cohomology of $V_\lambda$ instead of working directly with the mirror quintics $V^\circ_\lambda$.

Thus we will be interested in computing the matrix of the Frobenius map on the $\Sigma_5\times\Gamma$-invariant part of the middle dimensional $p$-adic cohomology of $V_\lambda$, where the latter is now considered as a family of varieties over $\mathbb{C}_p$.  We will only be interested in the case of $\lambda$ small and in fact ``zero". More precisely, near $\lambda=0$, but not at $\lambda=0$ the cohomologies form a vector bundle.  It will turn out that there is a natural extension of all the structure that we consider, including the Frobenius map, to $\lambda=0$.

Denote by $\cp^\dagger\langle x_i\rangle$ the subring of the formal
power series $\cp[[x_i]]$ consisting of the overconvergent series.
More precisely, $\cp^\dagger\langle x_i\rangle$ consists of elements
$\sum a_I x^I$ with $ord_p a_I\geq c|I|+d$ with $c>0$, i.e. those
power series that converge on a neighborhood of the closed polydisc
of radius $1$ around $0\in\cp^n$.

For a collection of operators $D_i:A_i\rightarrow B$, we write $B/D_i$ to denote the quotient of  $B$ by span of the images of
the operators $D_i$, i.e. $B/D_i=\frac{B}{\sum_i D_i(A_i)}$.  Let $a^I$ stand for $a^{i+j+k+n+m+s}$, where $I=\{i,j,k,n,m,s\}$ is
a multi-index.  Denote by $\pi\in\cp$ an element such that $\pi^{p-1}=-p$.

In addition to $p\neq 5$, we will further assume that $p\neq 2$.

\section{General structures}
We review at this point some general facts about the cohomology of the
quintic.

\subsection{Definition} Consider the one parameter family $V_\lambda$ of projective
Calabi-Yau 3-folds over $\cp$ given by
$$\varphi_\lambda=\lambda(x_0^5+x_1^5+x_2^5+x_3^5+x_4^5)+x_0 x_1 x_2 x_3 x_4.$$ Recall that we are interested in the invariant part of the middle dimensional
cohomology of $V_\lambda$ in the neighborhood of the degeneracy
point $\lambda=0$. More precisely, we need the following
definitions.

\begin{definition}
Let $H_{dR}^3(V_\lambda)$ denote the $D$-module on the parameter
space obtained by computing the relative de Rham cohomology of the
family.
\end{definition}

In our parametrization of the family $V_\lambda$ we are interested in the neighborhood of $\lambda=0$.  Recall that outside of $\lambda=0$, the
family $H_{dR}^3(V_\lambda)$ is a  vector bundle with a flat
connection.  In fact the connection
has a regular (i.e., logarithmic) singularity at $\lambda=0$.  This prompts the following.

\begin{definition}
Let $H_{dR}^{3\times}(V_\lambda)$ denote the restriction of
$H_{dR}^{3}(V_\lambda)$ to the formal punctured disc, i.e.,
$$H_{dR}^{3\times}(V_\lambda)=H_{dR}^{3}(V_\lambda)\otimes_{\cp[\lambda]}\cp((\lambda)).$$
\end{definition}

As was mentioned previously, our interest is mainly in the invariant part of
$H_{dR}^{3\times}(V_\lambda)$.  More precisely, the $204$-dimensional
$H_{dR}^{3\times}(V_\lambda)$ has an action (see above)
of $\Sigma_5\times\Gamma$. We would like to
study only the 4-dimensional invariant part since that computes the cohomology of the mirror quintics $V^\circ_\lambda$.

\begin{definition}
Denote by $H_{dR}^{3\times}(V_\lambda)^{inv}$ the 4-dimensional
(over $\cp((\lambda))$) invariant part of
$H_{dR}^{3\times}(V_\lambda)$ under the action of $\Sigma_5\times\Gamma$.
\end{definition}

Below we will examine a certain important basis of
$H_{dR}^{3\times}(V_\lambda)^{inv}$.  It will turn out that in this
basis the Frobenius map extends to $\lambda=0$.  We will
compute as explicitly as possible the Frobenius matrix at
$\lambda=0$.

\subsection{Dwork cohomology}
As we are interested in doing this computation as directly as
possible we use Dwork cohomology.  We note that besides the shift in
cohomological degree which we immediately take into account, there
is the matter of the difference in the definition of Frobenius.
Namely, in our situation, the Frobenius defined through Dwork is
\emph{$p^2$ times the usual one}.

\begin{definition}
Let
$H_{dR}^{3\#}(V_\lambda)=H_{dR}^{3\times}(V_\lambda)\otimes_{\cp((\lambda))}\cp[[\lambda^{\pm
1}]]$ and correspondingly
$H_{dR}^{3\#}(V_\lambda)^{inv}=H_{dR}^{3\times}(V_\lambda)^{inv}\otimes_{\cp((\lambda))}\cp[[\lambda^{\pm
1}]].$
\end{definition}

The reason for considering $H_{dR}^{3\#}(V_\lambda)$ is that it can
be worked with very directly, in particular the Frobenius map is
very easily described on it, as well as the Gauss-Manin connection.  Concretely (see \cite{local} for more details), $H_{dR}^{3\#}(V_\lambda)$ is given as the top cohomology
of the relative\footnote{This means that
differentiation with respect to $\lambda$ is not used in the
differential, it is saved for the Gauss-Manin connection.} (with respect to $\lambda$),
overconvergent, homogeneous\footnote{The degrees are as follows:
$\text{deg}(\lambda)=0$, $\text{deg}(x_i)=1$ and $\text{deg}(t)=-5$.
}, twisted de Rham complex
$$DR(\cp^\dagger\left<x_0,...,x_4,t\right>[[\lambda^{\pm 1}]]e^{\pi
t\varphi_\lambda})_0$$ where the usual de Rham differential $d$ is
replaced by $$d+d(\pi t\varphi_\lambda)$$ and so
\begin{equation}\label{dworkeqn}
H_{dR}^{3\#}(V_\lambda)\cong \frac{(\cp^\dagger\langle x_0,...,x_4,t
\rangle[[\lambda^{\pm 1}]]\,dx dt)_0}{\genfrac{}{}{0pt}{}{(\partial_{x_i}+\pi
t(5\lambda x_i^4+x_0..\widehat{x}_i..x_4))dx_i}{(\partial_t+\pi
\varphi_\lambda)dt}}
\end{equation} with $dx=dx_0...dx_4$.

The action of $\Sigma_5$ permutes the variables $x_i$ and $\Gamma$ acts on the $x_i$ as before.

\begin{remark}
Observe that we have the obvious containments
$H_{dR}^{3\#}(V_\lambda)\supset H_{dR}^{3\times}(V_\lambda)$ and
$H_{dR}^{3\#}(V_\lambda)^{inv}\supset
H_{dR}^{3\times}(V_\lambda)^{inv}$.  While we are working in
$H_{dR}^{3\#}(V_\lambda)$, we will soon see that everything of
interest is happening inside $H_{dR}^{3\times}(V_\lambda)^{inv}$, in
fact in the $\cp[[\lambda]]$ span of our chosen basis.
\end{remark}

\subsection{The Frobenius map}
We describe an action of the Frobenius operator that acts on the parameter space
as well as on the fibers, i.e., near $\lambda=0$ only the ``fiber" at
$\lambda=0$ is preserved by the action.

Explicitly it is given by $$Fr:\omega(x,t,\lambda)\mapsto e^{\pi(t^p
\varphi_{\lambda^p}(x^p)-t\varphi_\lambda)}\omega(x^p,t^p,\lambda^p).$$
In our case $e^{\pi(t^p \varphi_{\lambda^p}(x^p)-t\varphi_\lambda)}$
decomposes as $$A(\lambda tx_0^5)...A(\lambda tx_4^5)A(xt)$$ where
$A(z):=e^{\pi(z^p-z)}$ with $A(z)=\sum A_i z^i$.

The definition of overconvergence is precisely formulated in such a
way that the cohomology of the overconvergent complex agrees with
the usual de Rham cohomology and at the same time it is possible to
define the Frobenius map as above, i.e., the function $A(z)$ is
overconvergent.

\subsection{The Gauss-Manin connection}
The Gauss-Manin connection plays an important r\^{o}le in the
computations below.  It can be described very explicitly on forms
and descends to cohomology. We have
$$\nabla_{\partial_{\lambda}}=\partial_\lambda+\partial_\lambda(\pi
t\varphi_\lambda)$$ and in fact we will be mostly interested in
$$\delta:=\lambda\nabla_{\partial_{\lambda}}$$ since the
Gauss-Manin connection has a logarithmic pole at $\lambda=0$.

The Frobenius map is compatible in a certain sense with the
connection. Namely, $$\delta\circ Fr=pFr\circ\delta.$$

\subsection{Symplectic structure}
The cohomology groups $H_{dR}^{3\times}(V_\lambda)$ also possess a
$\lambda$-linear non-degenerate symplectic form
$(-,-):H_{dR}^{3\times}(V_\lambda)^{\otimes 2}\rightarrow
\cp((\lambda))$. We are interested in its restriction to
$H_{dR}^{3\times}(V_\lambda)^{inv}$ which is still non-degenerate. The symplectic form is compatible with the Gauss-Manin connection in the usual sense, i.e.,
$$\delta(u,v)=(\delta u,v)+(u,\delta v)$$ as well as the Frobenius map, $$p^3 Fr(u,v)=(Fr u, Fr v).$$

\section{The classes $\omega_I$ and the cohomology near $\lambda=0$}
In this section we study the cohomology elements that appear in the
image of the Frobenius map.  We prove certain key recursion
relations that will allow us to make explicit computations later on.

\begin{definition}
Let $\omega$ be the image of $dx dt$ in $H_{dR}^{3\#}(V_\lambda)$;
it is in fact in $H_{dR}^{3\times}(V_\lambda)^{inv}$. For
$\lambda\neq 0$ it is the cohomology class of the nowhere vanishing
holomorphic $3$-form on the Calabi-Yau threefold $V_\lambda$.
\end{definition}

Since $\omega\in H_{dR}^{3\times}(V_\lambda)^{inv}$, and $\delta$ is
compatible with the $\Sigma_5\times\Gamma$ action, so that $\delta^i\omega\in
H_{dR}^{3\times}(V_\lambda)^{inv}$ for all $i$.

\begin{definition}
Let $\h$ denote the $\cp[[\lambda]]$ submodule of
$H_{dR}^{3\times}(V_\lambda)^{inv}$ spanned by
$\{\omega,\delta\omega,\delta^2\omega,\delta^3\omega\}$.  In fact
$\h\otimes_{\cp[[\lambda]]}\cp((\lambda))=H_{dR}^{3\times}(V_\lambda)^{inv}$
so $\h$ can be viewed as an extension (as a vector bundle) of
$H_{dR}^{3\times}(V_\lambda)^{inv}$ to $\lambda=0$.
\end{definition}

Our $\h$ is a vector bundle over the formal disc with a logarithmic
connection.  As we will show below, it is preserved by everything
that we consider.

\begin{lemma}
The $\cp[[\lambda]]$-module $\h$ is preserved by $\delta$.
\end{lemma}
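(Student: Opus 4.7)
The plan is to exhibit a Picard--Fuchs relation for $\omega$ with coefficients in $\cp[[\lambda]]$. Since $\delta\omega,\delta^2\omega,\delta^3\omega\in\h$ tautologically, the content of the lemma is that $\delta^4\omega$ lies in the $\cp[[\lambda]]$-span of $\omega,\delta\omega,\delta^2\omega,\delta^3\omega$. A relation with coefficients in $\cp((\lambda))$ exists for dimension reasons, provided the four classes are linearly independent over $\cp((\lambda))$. I would establish the latter via Griffiths transversality: $\omega$ generates $F^3$, each application of $\delta$ strictly lowers the Hodge filtration by one step, and the invariant Hodge numbers are $(1,1,1,1)$ with nontrivial Kodaira--Spencer, so $\delta^i\omega$ has nonzero symbol in $F^{3-i}/F^{4-i}$ for $i=0,1,2,3$; a highest-order relation would then contradict this.

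The heart of the argument is to compute $\delta^4\omega$ directly in the Dwork realization \eqref{dworkeqn}. With $\delta=\lambda\partial_\lambda+\pi t\lambda(x_0^5+\cdots+x_4^5)$ and $\omega=dx\,dt$, one has $\delta\omega=\pi t\lambda\sum_i x_i^5\,dx\,dt$, and iterates. Two cohomological reduction rules from \eqref{dworkeqn} drive the calculation: applying $\partial_{x_i}+\pi t(5\lambda x_i^4+x_0\cdots\widehat{x}_i\cdots x_4)$ to the monomial $x_i$ yields
$$5\pi t\lambda\,x_i^5 \;\equiv\; -1 - \pi t\,x_0 x_1 x_2 x_3 x_4 \pmod{\text{exact}},$$
while $(\partial_t+\pi\varphi_\lambda)t$ gives $\pi t\varphi_\lambda\equiv -1$. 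Iterating $\delta$ up to four times and reducing by these two rules collapses the polynomial expressions in $\pi t$, $\prod_j x_j$, and symmetric functions of the $x_i^5$ to a $\cp[\lambda]$-combination of the $\delta^i\omega$. The resulting identity is the classical hypergeometric Picard--Fuchs equation for the mirror quintic, of the form
$$(1+3125\lambda^5)\,\delta^4\omega \;=\; -3125\lambda^5\bigl(10\,\delta^3\omega+35\,\delta^2\omega+50\,\delta\omega+24\,\omega\bigr).$$
Since $p\neq 5$, the factor $1+3125\lambda^5$ is a unit in $\cp[[\lambda]]$, so dividing through puts all coefficients into $\lambda^5\cp[[\lambda]]$, and in particular $\delta^4\omega\in\h$.

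The main obstacle is the combinatorial bookkeeping of the iterated reduction: the intermediate expressions $\delta^2\omega$ and $\delta^3\omega$ contain many monomials that must be repeatedly pushed through the two rules above, and assembling the final identity requires some patience. A more conceptual, less explicit alternative would bypass the computation altogether by invoking Deligne's canonical extension: regularity of the Gauss--Manin connection at $\lambda=0$ (already recalled in the preliminaries) together with maximal unipotency of the monodromy produces a $\delta$-stable $\cp[[\lambda]]$-lattice, and the fact that $\omega$ reduces mod $\lambda$ to a generator of the Jordan chain of the nilpotent residue identifies that lattice with $\h$ via Nakayama. The explicit computation is nevertheless preferable here, since the coefficients of the Picard--Fuchs relation will be needed in later sections.
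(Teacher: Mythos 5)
Your proof takes essentially the same route as the paper: derive the explicit Picard--Fuchs equation $\delta^4\omega=-g(\lambda)\left(10\,\delta^3\omega+35\,\delta^2\omega+50\,\delta\omega+24\,\omega\right)$ with $g(\lambda)=5^5\lambda^5/(5^5\lambda^5+1)$ from the cohomological reduction relations of the Dwork complex, and conclude because the coefficients are power series vanishing at $\lambda=0$ (the paper packages exactly this as Lemma \ref{pfequation} and Corollary \ref{pfcor}). One tiny quibble: the invertibility of $1+5^5\lambda^5$ in $\cp[[\lambda]]$ needs only that its constant term is $1$, not the hypothesis $p\neq 5$.
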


\begin{proof}
This follows directly from Corollary \ref{pfcor}.
\end{proof}

\begin{definition}
Denote by $$\omega_{ijkmns}$$ the image of $$(\lambda
tx_0^5)^i(\lambda tx_1^5)^j(\lambda tx_2^5)^k(\lambda
tx_3^5)^m(\lambda tx_4^5)^n(tx)^s$$ in $H_{dR}^{3\#}(V_\lambda)$.
We write $x$ for $x_0...x_4$.
\end{definition}

\begin{lemma}
The elements $\omega_{ijknms}$ are in $\h$.
\end{lemma}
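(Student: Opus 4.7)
The plan is to induct on the multi-index by deriving two clean recursions from the relations in (\ref{dworkeqn}), starting from the base case $\omega_{000000} = \omega \in \h$.

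The first step is to apply the relation $\bigl(\partial_{x_j} + \pi t(5\lambda x_j^4 + x/x_j)\bigr)g \equiv 0$ with $g$ equal to $x_j$ times the integrand of $\omega_I$. The Leibniz rule turns $\partial_{x_j}$ into the scalar $(5i_j+s+1)$ acting on $\omega_I$, while the multiplicative piece splits as $5\pi\lambda t x_j^5\cdot(\text{integrand of }\omega_I) + \pi tx\cdot(\text{integrand of }\omega_I)$, producing the classes $5\pi\omega_{I+e_j}$ and $\pi\omega_{I,s+1}$ respectively. This yields the identity
\[
(5i_j+s+1)\omega_I + 5\pi\omega_{I+e_j} + \pi\omega_{I,s+1} \equiv 0.
\]

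The second step is to compute $\delta\omega_I = \lambda\nabla_{\partial_\lambda}\omega_I$ directly from the formula $\nabla_{\partial_\lambda} = \partial_\lambda + \partial_\lambda(\pi t\varphi_\lambda)$. Differentiating the integrand gives $\delta\omega_I = (i_0+\cdots+i_4)\omega_I + \pi\sum_j \omega_{I+e_j}$. Summing the first relation over $j$ to eliminate $\sum_j\omega_{I+e_j}$ then produces the two clean recursions
\[
\omega_{I,s+1} = -\frac{1}{\pi}\bigl(\delta\omega_I + (s+1)\omega_I\bigr), \qquad \omega_{I+e_j} = \frac{1}{5\pi}\delta\omega_I - \frac{i_j}{\pi}\omega_I.
\]

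The induction is then immediate: if $\omega_I\in\h$, both recursions express $\omega_{I+e_j}$ and $\omega_{I,s+1}$ as $\cp[[\lambda]]$-linear combinations of $\omega_I$ and $\delta\omega_I$, and both lie in $\h$ since $\h$ is $\delta$-closed by the preceding lemma. Every $\omega_{I,s}$ is reached from $\omega_{000000}=\omega$ by finitely many applications of these two steps. The only real subtlety is isolating the clean recursion for $\omega_{I+e_j}$: the $\partial_t$ relation in (\ref{dworkeqn}) turns out to be the rescaled sum of the $\partial_{x_j}$ relations and thus supplies no new information, so one must combine the $\partial_{x_j}$ relation with the direct computation of $\delta\omega_I$ in order to cancel the unwanted sum $\sum_j\omega_{I+e_j}$ that appears naively.
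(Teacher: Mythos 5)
Your proof is correct and follows essentially the same route as the paper: the two recursions you derive are exactly the paper's $s$-relations (Lemma \ref{s-rel}) and $i$-relations (Corollary \ref{i-rel}), and your concluding induction using the $\delta$-stability of $\h$ is just a repackaging of the paper's step of writing $\omega_I$ as a constant-coefficient combination of $\delta^i\omega$ and then invoking the Picard--Fuchs reduction (Corollary \ref{pfcor}). The only cosmetic difference is that you re-derive the auxiliary relations inline (correctly, including the observation that the $\partial_t$ relation is redundant on these monomials) rather than citing them.
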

\begin{proof}
By Lemma \ref{s-rel} and Corollary \ref{i-rel} the elements
$\omega_{ijkmns}$ can be written as a linear combination of
$\delta^i\omega$  with constant (as functions of $\lambda$)
coefficients.  Thus $\omega_{ijkmns}$ can be written as a linear
combination of
$\{\omega,\delta\omega,\delta^2\omega,\delta^3\omega\}$ with power
series coefficients by Corollary \ref{pfcor}.
\end{proof}

We omit the indices that are $0$, unless it is the $s$ index. The
group $\Sigma_5$ acts on the $\omega_{ijknms}$ by permuting the
$i,j,k,n,m$, while $s$ remains fixed.  By the above Lemma, we don't
care where the non-zero, non-$s$ indices are, thus
$$\omega_{ijs}:=\omega_{ij000s}=\omega_{i00j0s},$$ etc.  Finally, we
sometimes write $\omega_I$ where $I$ is a multi-index, i.e.,
$$\omega_I=\omega_{ijknms}.$$

\begin{lemma}\label{frobandh}
The Frobenius map preserves $\h$.
\end{lemma}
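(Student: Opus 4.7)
The plan is to reduce the statement to showing $Fr(\omega)\in\h$ and then verify this by explicit expansion using the $\omega_I$.

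First, I would use the commutation relation $\delta\circ Fr = p\, Fr\circ\delta$ together with the fact (established earlier) that $\delta$ preserves $\h$.  Once we know $Fr(\omega)\in\h$, the relation yields $Fr(\delta^k\omega)=p^{-k}\delta^k Fr(\omega)\in \h$ for $k=0,1,2,3$, and then $\sigma$-semilinearity of $Fr$ (with $\sigma\colon \lambda\mapsto\lambda^p$) gives, for any $\sum_k f_k(\lambda)\delta^k\omega\in \h$, the expression
$$Fr\!\Big(\sum_k f_k(\lambda)\delta^k\omega\Big)=\sum_k f_k(\lambda^p)\,Fr(\delta^k\omega),$$
which lies in $\h$ because $f_k(\lambda^p)\in\cp[[\lambda]]$ and each $Fr(\delta^k\omega)\in\h$.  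So everything reduces to the single claim that $Fr(\omega)\in\h$.

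Second, I would compute $Fr(\omega)$ directly from the definition.  Since $\omega$ is the image of $dx\,dt$, we get $\omega(x^p,t^p,\lambda^p)=p^6 (x_0\cdots x_4)^{p-1}t^{p-1}\,dx\,dt$, and using the product decomposition $e^{\pi(t^p\varphi_{\lambda^p}(x^p)-t\varphi_\lambda)}=\prod_{i=0}^4 A(\lambda t x_i^5)\cdot A(xt)$ and the expansion $A(z)=\sum A_n z^n$, after rewriting $s'=s+p-1$ the result is
$$Fr(\omega)=p^6\sum_{n_0,\dots,n_4,\,s\geq 0}\Bigl(\prod_{i=0}^4 A_{n_i}\Bigr)A_s\cdot \omega_{n_0,n_1,n_2,n_3,n_4,\,s+p-1}.$$
By the lemma immediately preceding, every $\omega_{n_0,\dots,n_4,s+p-1}\in\h$, so each term of the sum lies in $\h$.

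Third, and this is where the real work sits, I would argue that this infinite sum converges to an element of $\h$ as a finitely generated $\cp[[\lambda]]$-module.  Since $\h$ is free of rank $4$ over $\cp[[\lambda]]$ with basis $\{\omega,\delta\omega,\delta^2\omega,\delta^3\omega\}$, writing $\omega_I=\sum_{k=0}^3 f_I^{(k)}(\lambda)\,\delta^k\omega$ uniquely, I need the four scalar sums $\sum_I(\prod A_{n_i})A_s\,f_I^{(k)}(\lambda)$ to converge in $\cp[[\lambda]]$.  The convergence comes from two inputs: (i) $\omega_I$ carries a built-in factor $\lambda^{n_0+\cdots+n_4}$, so for each truncation $\lambda^N$ only finitely many multi-indices $(n_0,\dots,n_4)$ contribute; (ii) overconvergence of $A(z)=e^{\pi(z^p-z)}$ forces $p$-adic decay $v_p(A_n)\geq cn - d$ for some $c>0$, and a parallel polynomial-in-$s$ control on $f_{I,s+p-1}^{(k)}(\lambda)$ lets the geometric decay of $A_s$ dominate the $s$-sum at each fixed coefficient of $\lambda$.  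This controls the remaining sum over $s$ for fixed $I$.

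The main obstacle is exactly step three: turning the formal identity in $H_{dR}^{3\#}(V_\lambda)^{inv}$ into a convergent expansion in the rank-$4$ module $\h$.  A priori $Fr(\omega)$ is only known to sit in $H_{dR}^{3\#}$, so its expansion in $\{\delta^k\omega\}$ has coefficients in $\cp[[\lambda^{\pm 1}]]$; ruling out negative powers of $\lambda$ is the content of the lemma, and it is precisely the overconvergence of $A(z)$, coupled with the $\lambda$-adic filtration inherited by the $\omega_I$, that forces the pole at $\lambda=0$ to be absent.  Once this convergence is in hand the lemma follows as described in the first paragraph.
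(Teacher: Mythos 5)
Your proof is correct and follows essentially the same route as the paper: expand the Frobenius image of a monomial form via the product decomposition $A(\lambda t x_0^5)\cdots A(\lambda t x_4^5)A(xt)$, identify the resulting terms as $\omega_I$'s, and invoke the preceding lemma that each $\omega_I\in\h$ (the paper reduces $\delta^i\omega$ to the $(xt)^j$ directly rather than via the commutation relation $\delta\circ Fr=pFr\circ\delta$, but this is a cosmetic difference). Your third step, making explicit the convergence of the infinite sum in the rank-$4$ $\cp[[\lambda]]$-module, is a point the paper's one-line conclusion leaves implicit, and it is a worthwhile addition.
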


\begin{proof}
Observe that $\delta^i\omega$ can be written as a linear combination
of $(xt)^j$, i.e., $\delta^i\omega=\sum_{j\leqslant i}a_j (xt)^j$
with $a_j$ constant.

Furthermore,\begin{align*} Fr((xt)^j dx dt)&=p^6 A(\lambda t
x_0^5)... A(\lambda t x_4^5) A(tx)(xt)^{jp+p-1} dx dt\\
&=p^6\sum A_i ... A_m A_s\omega_{ijknm(s+(j+1)p-1)}.
\end{align*}  Since $\omega_I$ are in $\h$ we are done.

\end{proof}

As a consequence of the Lemma above we see that we may consider the
restriction of the Frobenius map to $\lambda=0$, i.e., the matrix of
$Fr|_0:\h_0\rightarrow \h_0$.  An essential tool in this
investigation is the symplectic form $(-,-)$ that as we see below
also behaves well with respect to $\h$.

\begin{lemma}
The pairing $(-,-)$ maps $\h^{\otimes 2}$ to $\cp[[\lambda]]$ and
its restriction to $\lambda=0$, i.e.,
$$(-,-)_0:\h_0^{\otimes 2}\rightarrow \cp$$ is non-degenerate.
Furthermore, $(\omega,\delta^3\omega)_0=Y$ and so
$(\delta\omega,\delta^2\omega)_0=-Y$.\footnote{Since $\delta
f|_{\lambda=0}=0$ for $f$ a power series in $\lambda$, we have that
for $u$ and $v$ sections of $\h$, $(\delta u,v)_0=-(u,\delta v)_0.$
 The non-zero constant $Y$ is $Y(0)$, where $Y(z)$ is the Yukawa coupling, see Sec. \ref{pf}.}
\end{lemma}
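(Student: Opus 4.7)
The plan is to combine the Leibniz rule $\delta(u,v) = (\delta u, v) + (u, \delta v)$ with Griffiths transversality for the Hodge filtration of the Calabi--Yau threefold $V_\lambda$: $\omega$ generates $F^3$, $\delta^i\omega \in F^{3-i}$, and the symplectic pairing makes $F^i$ and $F^{4-i}$ dual, so $(F^i, F^j) = 0$ whenever $i + j > 3$. From this, I would immediately read off $(\omega, \delta^i\omega) = 0$ for $i = 0, 1, 2$, and identify the remaining top pairing $(\omega, \delta^3\omega) = Y(\lambda) \in \cp[[\lambda]]$ as the Yukawa coupling, whose value $Y = Y(0)$ at the boundary is taken as a known non-zero constant (detailed in Sec.~\ref{pf}).

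Next I would apply the Leibniz rule to $(\omega, \delta^2\omega) = 0$ to obtain $(\delta\omega, \delta^2\omega) + (\omega, \delta^3\omega) = 0$, i.e.\ $(\delta\omega, \delta^2\omega) = -Y(\lambda)$, whence at $\lambda = 0$ the relation $(\delta\omega, \delta^2\omega)_0 = -Y$. Equivalently this is the footnote's observation $(\delta u, v)_0 = -(u, \delta v)_0$ applied to $u = \omega$, $v = \delta^2\omega$.

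To see that $(-,-)$ maps $\h^{\otimes 2}$ into $\cp[[\lambda]]$: since $\h$ is $\delta$-stable by the preceding Lemma, $\delta^4\omega$ admits an expansion as a $\cp[[\lambda]]$-combination of $\omega, \delta\omega, \delta^2\omega, \delta^3\omega$. Iteratively differentiating the already computed pairings and substituting this expression for $\delta^4\omega$ expresses every $(\delta^i\omega, \delta^j\omega)$, $0 \leq i,j \leq 3$, as a $\cp[[\lambda]]$-polynomial in $Y(\lambda)$, its $\delta$-derivatives, and the structure constants of $\delta$ on $\h$, establishing the integrality statement.

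Finally, for non-degeneracy at $\lambda = 0$, the resulting Gram matrix of $\{\omega, \delta\omega, \delta^2\omega, \delta^3\omega\}$ is a $4 \times 4$ skew-symmetric matrix with antidiagonal entries $(Y, -Y, Y, -Y)$ and the remaining entries either zero (as forced by the vanishings above) or irrelevant starred values; its Pfaffian equals $-Y^2$, so its determinant is $Y^4 \neq 0$. The only non-formal input is the non-vanishing of $Y(0)$; everything else is formal from compatibility of the pairing with $\delta$ and from Griffiths transversality, so the real work (deferred to Sec.~\ref{pf}) will be to identify and compute the Yukawa coupling at the boundary.
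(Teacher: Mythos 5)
Your proposal is correct and follows essentially the same route as the paper: Griffiths transversality kills $(\omega,\delta^{i\le 2}\omega)$, Lemma \ref{yukawa} supplies $Y(\lambda)\in\cp[[\lambda]]^\times$, the Leibniz rule gives $(\delta\omega,\delta^2\omega)=-Y(\lambda)$, and the power-series claim follows by iterating $\delta$ using the $\delta$-stability of $\h$. Your explicit Gram-matrix/Pfaffian verification of non-degeneracy is slightly more detailed than the paper's one-line observation, but it is the same argument.
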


\begin{proof}
For the first claim it is sufficient to check that
$(\delta^i\omega,\delta^j\omega)$ is a power series for arbitrary
$i,j\leq 3$.  Since
$(\delta^i\omega,\delta^j\omega)=\delta(\delta^{i-1}\omega,\delta^j\omega)-(\delta^{i-1}\omega,\delta^{j+1}\omega)$
it is enough to check that $(\omega,\delta^j\omega)$ is a power
series.  But this is true for $j\leqslant 3$ by Griffiths
transversality and Lemma \ref{yukawa}.  The pairing is
non-degenerate at $\lambda=0$ precisely because
$(\omega,\delta^3\omega)_0=Y\neq 0$ by Lemma \ref{yukawa} and
$(\delta\omega,\delta^2\omega)_0=-Y$ by the compatibility of the
symplectic pairing with the Gauss-Manin connection.
\end{proof}

\subsection{Differentiating $\omega_I$ and other relations}
Since the image of $\omega$ under the Frobenius map is expressed in terms of $\omega_I$, so it is necessary to re-express these elements in terms of our chosen basis of $\h$.  To that end we provide some key reduction formulas that allow us to accomplish that goal.

\begin{lemma}\label{s-rel}
We have the following $s$-relations:
\begin{equation}\label{sreduction}\delta\omega_{ijknms}=-(s+1)\omega_{ijknms}-\pi\omega_{ijknm(s+1)}\end{equation}
for $i,j,k,n,m,s\geq 0$.
\end{lemma}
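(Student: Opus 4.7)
The plan is to compute $\delta \omega_{ijknms}$ directly from the formula $\delta = \lambda\partial_\lambda + \pi\lambda t\sum_\alpha x_\alpha^5$ applied to the explicit representative, then use the $t$-derivative relation from \eqref{dworkeqn} to eliminate the sum of shifted $\omega$'s.

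First I would unwind definitions. Writing $|I|:=i+j+k+m+n$ and $f=(\lambda t x_0^5)^i(\lambda t x_1^5)^j(\lambda t x_2^5)^k(\lambda t x_3^5)^m(\lambda t x_4^5)^n(tx)^s$, the total power of $\lambda$ in $f$ is $|I|$, so $\lambda\partial_\lambda f=|I|\,f$. Meanwhile multiplication by $\pi\lambda t x_\alpha^5$ shifts the $\alpha$-th exponent up by one (since $t$ and $\lambda$ also go up by one), so the direct computation gives
\begin{equation*}
\delta\omega_{ijknms}=|I|\,\omega_{ijknms}+\pi\bigl[\omega_{(i+1)jknms}+\omega_{i(j+1)knms}+\omega_{ij(k+1)nms}+\omega_{ijk(n+1)ms}+\omega_{ijkn(m+1)s}\bigr].
\end{equation*}

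Next I would invoke the $t$-relation, which says that for any overconvergent $g$ one has $(\partial_t g+\pi\varphi_\lambda g)\,dx\,dt\equiv 0$ modulo the denominators in \eqref{dworkeqn}. The natural choice is $g=(\lambda x_0^5)^i(\lambda x_1^5)^j(\lambda x_2^5)^k(\lambda x_3^5)^m(\lambda x_4^5)^n x^s\,t^{|I|+s+1}$, i.e.\ one factor of $t$ more than the representative of $\omega_{ijknms}$. Then $\partial_t g=(|I|+s+1)\,f$, while $\pi\varphi_\lambda g=\pi\lambda\sum_\alpha x_\alpha^5\cdot g+\pi x\cdot g$; each summand of the first piece is a representative of the corresponding shifted $\omega$, and the second piece is $\pi$ times the representative of $\omega_{ijknm(s+1)}$. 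The $t$-relation therefore reads
\begin{equation*}
(|I|+s+1)\,\omega_{ijknms}+\pi\bigl[\omega_{(i+1)jknms}+\omega_{i(j+1)knms}+\omega_{ij(k+1)nms}+\omega_{ijk(n+1)ms}+\omega_{ijkn(m+1)s}\bigr]+\pi\,\omega_{ijknm(s+1)}\equiv 0.
\end{equation*}

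Solving this for the bracketed sum and substituting into the expression for $\delta\omega_{ijknms}$ produces
\begin{equation*}
\delta\omega_{ijknms}=|I|\,\omega_{ijknms}-(|I|+s+1)\,\omega_{ijknms}-\pi\,\omega_{ijknm(s+1)}=-(s+1)\,\omega_{ijknms}-\pi\,\omega_{ijknm(s+1)},
\end{equation*}
which is \eqref{sreduction}. The only place requiring care is selecting $g$ with the correct $t$-power so that $\partial_t g$ gives back $f$ itself rather than $f/t^k$; once this is arranged, the remaining $|I|$ in the direct computation of $\delta$ exactly cancels the corresponding $|I|$ produced by $\partial_t$, isolating the desired $-(s+1)$ coefficient. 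No deep input is needed beyond the explicit presentation \eqref{dworkeqn}.
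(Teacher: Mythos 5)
Your proof is correct and is essentially the paper's argument: the paper performs the same elimination by rewriting $\pi t\lambda\sum x_\alpha^5=\pi t(\varphi_\lambda-x)$ and using the relation $\partial_t+\pi\varphi_\lambda\equiv 0$ applied to $t\cdot(\text{representative})$ to replace $\pi t\varphi_\lambda$ by $-\partial_t t$ at the operator level, which is exactly your substitution carried out in a different order. The bookkeeping ($\lambda$-degree $|I|$, $t$-degree $|I|+s+1$ after the extra factor of $t$) matches the paper's.
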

\begin{proof}
Recall that
\begin{align*}
\delta&=\lambda\partial_\lambda+\pi t\lambda(x_0^5+...+x_4^5)\\
&=\lambda\partial_\lambda+\pi t(\varphi_\lambda-x)\\
\intertext{and by the last relation in Equation \ref{dworkeqn}}
&=\lambda\partial_\lambda-\partial_t t-\pi t x
\end{align*} so that
\begin{align*}
\delta\omega_{ijknms}&=(\lambda\partial_\lambda-\partial_t t-\pi t
x)((\lambda tx_0^5)^i...(\lambda tx_4^5)^m(xt)^s)\\
&=(i+...+m)\omega_I-(i+...+m+s+1)\omega_I-\pi\omega_{ijknm(s+1)}\\
&=-(s+1)\omega_{ijknms}-\pi\omega_{ijknm(s+1)}
\end{align*}

\end{proof}

\begin{lemma}\label{i-rel-one}
We have the following $i$-relations (by symmetry of the first five
relations in the Equation \ref{dworkeqn} they apply also to any
other index except $s$):
$$\omega_{(i+1)jknms}=-\frac{5i+(s+1)}{5\pi}\omega_{ijknms}-\frac{1}{5}\omega_{ijknm(s+1)}$$ for $i,j,k,n,m,s\geq 0$.
\end{lemma}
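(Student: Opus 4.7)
The plan is to derive the $i$-relation directly from the $x_0$-relation that sets the image of $(\partial_{x_0}+\pi t(5\lambda x_0^4+x_1x_2x_3x_4))dx_0$ to zero in cohomology (see the denominator of Equation \ref{dworkeqn}). Since the remaining indices $j,k,m,n$ will play no role, the same calculation, applied to $\partial_{x_1},\dots,\partial_{x_4}$, will yield the symmetric statements for those indices.

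Concretely, let $M$ denote the monomial $(\lambda tx_0^5)^i(\lambda tx_1^5)^j(\lambda tx_2^5)^k(\lambda tx_3^5)^m(\lambda tx_4^5)^n(tx)^s$ whose class is $\omega_{ijknms}$, and apply the $x_0$-relation to the element $h=x_0 M$. The total $x_0$-degree of $h$ is $5i+s+1$, so write $h=x_0^{5i+s+1}g$ where $g$ does not involve $x_0$. Then each of the three pieces of $(\partial_{x_0}+\pi t(5\lambda x_0^4+x_1x_2x_3x_4))h$ can be recognized as an $\omega_I$ after multiplying monomial factors: differentiating gives $(5i+s+1)x_0^{5i+s}g=(5i+s+1)M$; multiplying by $5\pi\lambda tx_0^4$ produces $5\pi(\lambda tx_0^5)M$, i.e.\ raises $i$ by one; and multiplying by $\pi tx_1x_2x_3x_4$ produces $\pi(tx)M$, i.e.\ raises $s$ by one. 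Thus in cohomology one obtains
\begin{equation*}
0=(5i+s+1)\,\omega_{ijknms}+5\pi\,\omega_{(i+1)jknms}+\pi\,\omega_{ijknm(s+1)},
\end{equation*}
and isolating $\omega_{(i+1)jknms}$ yields the claimed identity.

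There is no real obstacle here: the lemma is a direct rewriting of a single relation from Equation \ref{dworkeqn}, and the only point requiring attention is correctly accounting for the $x_0$-degree contributions from both $(\lambda tx_0^5)^i$ and $(xt)^s$ when applying $\partial_{x_0}$ to $x_0 M$. The analogous $j,k,m,n$-relations claimed in the parenthetical remark of the lemma follow by replacing $x_0$ with the appropriate $x_\ell$ in $h$ and invoking the corresponding relation from Equation \ref{dworkeqn}.
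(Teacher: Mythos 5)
Your proof is correct and is essentially the same as the paper's: both apply the $x_0$-relation from Equation \ref{dworkeqn} to $x_0\,\omega_I$, count the total $x_0$-degree $5i+s+1$, and identify the two multiplication terms as $5\pi\,\omega_{(i+1)jknms}$ and $\pi\,\omega_{ijknm(s+1)}$. The symmetry remark is likewise handled identically.
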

\begin{proof}
Recall that
\begin{align*}
\omega_{(i+1)jknms}&=(\lambda tx_0^5)^{i+1}...(\lambda
tx_4^5)^m(xt)^s\\
&=\lambda t x_0^4 x_0\omega_I\\
\intertext{and by the first relation in Equation \ref{dworkeqn}}
&=-\frac{1}{5\pi}(\partial_{x_0}x_0+\pi tx)\omega_I\\
&=-\frac{1}{5\pi}\left((5i+s+1)\omega_I+\pi\omega_{ijknm(s+1)}\right)
\end{align*}

\end{proof}

\begin{remark}\label{negative-i}
In fact, Lemma \ref{i-rel-one} holds for $i=-1$ and $s=4$ if we
correctly interpret it.  Namely,
$$\omega_{-\!100005}=\dfrac{(xt)^5}{\lambda
tx_0^5}=\dfrac{1}{\lambda^5}\omega_{011110}.$$  More precisely,
\begin{align*}
\omega_4&=(xt)^4\\
&=\lambda tx_0^4\dfrac{x_1^4...x_4^4
t^3}{\lambda}\\
&=-\dfrac{1}{5\pi}\left(\partial_{x_0}+\pi t
x_1...x_4\right)\dfrac{x_1^4...x_4^4
t^3}{\lambda}\\
&=-\dfrac{1}{5\lambda}x_1^5...x_4^5 t^4
=-\dfrac{1}{5\lambda^5}(\lambda tx_1^5)...(\lambda tx_4^5)
=-\dfrac{1}{5\lambda^5}\,\omega_{011110}.
\end{align*}  We will need this to compute the Picard-Fuchs equation
for $\omega$.

\end{remark}

\begin{cor}\label{i-rel}
A better version of $i$-relations (by symmetry of the first five
relations in the Equation \ref{dworkeqn} they apply also to any
other index except $s$):
$$\delta\omega_{ijknms}=5i\omega_{ijknms}+5\pi\omega_{(i+1)jknms}$$ for $i,j,k,n,m,s\geq 0$.
\end{cor}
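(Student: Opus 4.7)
The plan is to obtain the corollary by linearly combining the two preceding results so as to eliminate the term $\omega_{ijknm(s+1)}$, leaving only $\omega_{ijknms}$ and $\omega_{(i+1)jknms}$ on the right-hand side.

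First I would solve the $i$-relation of Lemma \ref{i-rel-one} for $\omega_{ijknm(s+1)}$, obtaining
$$\omega_{ijknm(s+1)} = -5\,\omega_{(i+1)jknms} - \frac{5i+(s+1)}{\pi}\,\omega_{ijknms}.$$
Then I would substitute this expression for $\omega_{ijknm(s+1)}$ into the $s$-relation
$$\delta\omega_{ijknms} = -(s+1)\omega_{ijknms} - \pi\,\omega_{ijknm(s+1)}$$
of Lemma \ref{s-rel}. The $-\pi$ cancels the $1/\pi$ and flips signs, producing
$$\delta\omega_{ijknms} = -(s+1)\omega_{ijknms} + 5\pi\,\omega_{(i+1)jknms} + (5i+s+1)\omega_{ijknms},$$
and the two $(s+1)$ contributions cancel, leaving exactly $5i\,\omega_{ijknms}+5\pi\,\omega_{(i+1)jknms}$, as desired.

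There is no real obstacle here: once the two lemmas are in hand, this is a one-line algebraic rearrangement, and the symmetry remark about the other four indices is inherited directly from the symmetry noted in Lemma \ref{i-rel-one}. The only small point worth flagging is that the $(s+1)$-dependence of the coefficients in Lemma \ref{i-rel-one} is precisely what is needed to cancel the $-(s+1)$ coming from Lemma \ref{s-rel}; this cancellation is the reason the corollary gives a strictly cleaner ``$i$-relation'' (with coefficient $5i$ instead of something involving $s$) and is what makes it the version actually used in later arguments.
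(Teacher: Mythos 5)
Your proposal is correct and is exactly the paper's argument: the paper's proof is simply ``Put Lemmas \ref{s-rel} and \ref{i-rel-one} together,'' and your computation spells out precisely that elimination of $\omega_{ijknm(s+1)}$, with the algebra checking out. No issues.
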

\begin{proof}
Put Lemmas \ref{s-rel} and \ref{i-rel-one} together.
\end{proof}

\subsection{The Picard-Fuchs equation and the Yukawa coupling}\label{pf}
In this section we focus on the derivation of the Picard-Fuchs
equation for $\omega=\omega_{000000}$.  This is a fourth order
differential equation of the form
$$\delta^4\omega=g_3(\lambda)\delta^3\omega+g_2(\lambda)\delta^2\omega+g_1(\lambda)\delta\omega+
g_0(\lambda)\omega.$$  This equation is central to the study of the
quintic family, however the only property of it that we will use is
the fact that the coefficients $g_i$ are power series in $\lambda$
that vanish at $\lambda=0$.

\begin{definition}
Let $\#(I)=\#(\{i,j,k,n,m,s\})$ denote the number of non-zero
indices among $i,j,k,n,m$, i.e., wether or not $s=0$ does not affect
$\#(I)$.
\end{definition}

\begin{lemma}\label{pfequation}
Let $g(\lambda)=\dfrac{5^5\lambda^5}{5^5\lambda^5+1}$, then the
Picard-Fuchs equation for $\omega$ is
$$\delta^4\omega=-g(\lambda)(10\delta^3\omega+35\delta^2\omega+50\delta\omega+
24\omega).$$

\end{lemma}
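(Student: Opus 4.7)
The plan is to iterate Corollary \ref{i-rel} four times (raising a different first-five slot each time from $0$ to $1$) in order to rewrite $\delta^4\omega$ as a scalar multiple of $\omega_{011110}$, then invoke Remark \ref{negative-i} to express this in terms of $\omega_4 = (xt)^4$, and finally iterate Lemma \ref{s-rel} in reverse to rewrite $\omega_4$ as a polynomial in $\delta$ applied to $\omega$. The Picard-Fuchs equation will then drop out by solving the resulting relation for $\delta^4\omega$.

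First I would successively apply Corollary \ref{i-rel} in slots $2$, $3$, $4$, $5$ of the multi-index. Each chosen slot starts at $0$, so the $5i\,\omega_I$ term vanishes at every step and each $\delta$ contributes exactly one factor of $5\pi$. After four iterations,
$$\delta^4\omega \;=\; (5\pi)^4\,\omega_{011110}.$$
By Remark \ref{negative-i}, $\omega_{011110} = -5\lambda^5\,\omega_4$ where $\omega_4 = (xt)^4$, so
$$\delta^4\omega \;=\; -\,5^5\pi^4\lambda^5\,\omega_4.$$

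Next I would iterate Lemma \ref{s-rel} in the form $\pi\omega_{s+1} = -(\delta+s+1)\omega_s$. Starting from $\omega_0 = \omega$, an easy induction on $s$ gives
$$\pi^4\omega_4 \;=\; (\delta+1)(\delta+2)(\delta+3)(\delta+4)\,\omega \;=\; \bigl(\delta^4 + 10\delta^3 + 35\delta^2 + 50\delta + 24\bigr)\omega.$$
Substituting into the expression for $\delta^4\omega$ and collecting the $\delta^4\omega$ term on the left yields
$$(1+5^5\lambda^5)\,\delta^4\omega \;=\; -\,5^5\lambda^5\bigl(10\delta^3 + 35\delta^2 + 50\delta + 24\bigr)\omega.$$
Since $1+5^5\lambda^5$ is a unit in $\cp[[\lambda]]$, dividing through produces the claimed Picard-Fuchs equation with $g(\lambda) = 5^5\lambda^5/(1+5^5\lambda^5)$.

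The main obstacle is really just bookkeeping: the factor $-5\lambda^5$ coming out of Remark \ref{negative-i} must be tracked carefully, and at each step of the first leg one must apply the $i$-relation in a slot whose current index is still $0$. Once the two legs are lined up, the coefficients $10, 35, 50, 24$ are forced, being the elementary symmetric functions in $\{1,2,3,4\}$ that arise from the expansion of $(\delta+1)(\delta+2)(\delta+3)(\delta+4)$, so no further verification is needed.
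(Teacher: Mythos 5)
Your proof is correct and follows essentially the same route as the paper: both reduce $\delta^4\omega$ to $5^4\pi^4\omega_{111100}=-5^5\pi^4\lambda^5\omega_4$ via the $i$-relations together with Remark \ref{negative-i}, and then use the $s$-relations to express $\pi^4\omega_4$ in terms of $\delta^{j\leq 4}\omega$. Your factored form $\pi^4\omega_4=(\delta+1)(\delta+2)(\delta+3)(\delta+4)\,\omega$ is a slightly cleaner way of producing the coefficients $10,35,50,24$ than the paper's explicit linear system, but the content is identical.
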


\begin{proof}
The idea\footnote{Our approach is similar to \cite{candelas}.} is to
rewrite $\omega_{111100}$ as a linear combination of $\omega_{i\leq
4}$ using Lemma \ref{i-rel-one} and then express $\omega_{111100}$
in terms of $\omega_4$ as in the Remark following that Lemma.  Then
rewrite everything in terms of $\delta^{i\leq 4}\omega$ using Lemma
\ref{s-rel}.

Let $\widetilde{\omega}_{I}=(-5)^{\#(I)}\pi^I\omega_I$ then we see
that
\begin{align*}
\widetilde{\omega}_{1jknms}&=(-5)^{\#(I)}\pi^I\omega_{1jknms}\\
&=(-5)^{\#(I)}\pi^I\left(-\dfrac{s+1}{5\pi}\omega_{0jknms}-\dfrac{1}{5}\omega_{0jknm(s+1)}\right)\\
&=(s+1)(-5)^{\#(I)-1}\pi^{I-1}\omega_{0jknms}+(-5)^{\#(I)-1}\pi^{I}\omega_{0jknm(s+1)}\\
&=(s+1)\widetilde{\omega}_{0jknms}+\widetilde{\omega}_{0jknm(s+1)}.
\end{align*}  If we let $\widehat{\delta}=-\delta$ and
$\widehat{\omega}_s=\pi^s\omega_s$ then
$$
\widehat{\delta}\widehat{\omega}_s=-\delta\pi^s\omega_s
=(s+1)\pi^s\omega_s+\pi^{s+1}\omega_{s+1}
=(s+1)\widehat{\omega}_s+\widehat{\omega}_{s+1}. $$  Thus
$\widehat{\delta}^4\widehat{\omega}_0=\widetilde{\omega}_{111100}$,
i.e.,
$$\delta^4\omega=5^4\pi^4\omega_{111100}=-5^5\pi^4\lambda^5\omega_4.$$

Now by Lemma \ref{s-rel},
$\delta^4\omega=\alpha_4\omega_4+\alpha_3\omega_3+\alpha_2\omega_2+\alpha_1\omega_1+\alpha_0\omega_0$
with $\alpha_i$ constant; and it is clear that $\alpha_4=\pi^4$.  So
that
$$\delta^4\omega=g(\lambda)(\alpha_3\omega_3+\alpha_2\omega_2+\alpha_1\omega_1+\alpha_0\omega_0)$$
and again by Lemma \ref{s-rel},
$$\alpha_3\omega_3+\alpha_2\omega_2+\alpha_1\omega_1+\alpha_0\omega_0=a_3\delta^3\omega+a_2\delta^2
\omega+a_1\delta\omega+a_0\omega$$ with $a_i$ constant.

At this point the fact we need, namely that the coefficients of the
Picard-Fuchs equation are power series in $\lambda$ that vanish at
$\lambda=0$ is proved.  However to compute the $a_i$ we need to work
a bit more.

We note that \begin{align*} \omega&=\omega_0\\
\delta\omega&=-\omega_0-\pi\omega_1\\
\delta^2\omega&=\omega_0+3\pi\omega_1+\pi^2\omega_2\\
\delta^3\omega&=-\omega_0-7\pi\omega_1-6\pi^2\omega_2-\pi^3\omega_3\\
\delta^4\omega&=\omega_0+15\pi\omega_1+25\pi^2\omega_2+10\pi^3\omega_3+\pi^4\omega_4\\
\end{align*} and so we need to solve $$a_3\delta^3\omega+a_2\delta^2
\omega+a_1\delta\omega+a_0\omega=10\pi^3\omega_3+25\pi^2\omega_2+15\pi\omega_1+\omega_0$$
which yields $a_3=-10$, $a_2=-35$, $a_1=-50$ and $a_0=-24$.
\end{proof}

By induction we may now write any $\delta^{i>3}\omega$ as a linear
combination of $\delta^{i\leqslant 3}\omega$ with coefficients that
are power series in $\lambda$ that vanish at $\lambda=0$.  More
precisely, we have the following.

\begin{cor}\label{pfcor}
For $i>3$ we can write
$$\delta^i\omega=g_3^i(\lambda)\delta^3\omega+g^i_2(\lambda)\delta^2\omega+g^i_1(\lambda)\delta\omega+
g^i_0(\lambda)\omega$$ with $g^i_j$ power series in $\lambda$ that
vanish at $\lambda=0$, i.e., $$\delta^{i>3}\omega=0\,\, in\,\, \h_0.$$
\end{cor}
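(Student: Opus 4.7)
The plan is to induct on $i$, using Lemma \ref{pfequation} as both the base case and the mechanism for reducing $\delta^4\omega$ whenever it arises. Let $R \subset \cp[[\lambda]]$ denote the (maximal) ideal of power series vanishing at $\lambda=0$. I need to show that for every $i>3$ one can write $\delta^i\omega = \sum_{j=0}^{3} g^i_j(\lambda)\, \delta^j\omega$ with each $g^i_j \in R$; the final assertion $\delta^{i>3}\omega = 0$ in $\h_0$ is then immediate since the $g^i_j$ kill the basis elements upon restriction to $\lambda=0$.

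For the base case $i=4$, Lemma \ref{pfequation} gives precisely such an expression with $g^4_j = -c_j\, g(\lambda)$, where $c_0=24,c_1=50,c_2=35,c_3=10$ and $g(\lambda) = 5^5\lambda^5/(5^5\lambda^5+1) \in R$ (indeed $g$ vanishes to order $5$). This establishes the induction hypothesis at $i=4$.

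For the inductive step, suppose $\delta^i\omega = \sum_{j=0}^{3} g^i_j\, \delta^j\omega$ with $g^i_j \in R$. Applying $\delta$ and using the Leibniz rule for the Gauss-Manin connection (where $\delta$ acts on the coefficient function as $\lambda \partial_\lambda$, since $\delta = \lambda\nabla_{\partial_\lambda}$), we get
\begin{equation*}
\delta^{i+1}\omega \;=\; \sum_{j=0}^{3} \bigl(\lambda\partial_\lambda g^i_j\bigr)\, \delta^j\omega \;+\; \sum_{j=0}^{3} g^i_j\, \delta^{j+1}\omega.
\end{equation*}
The only term not yet in the span of $\{\omega,\delta\omega,\delta^2\omega,\delta^3\omega\}$ is $g^i_3\,\delta^4\omega$, and I substitute Lemma \ref{pfequation} for $\delta^4\omega$ to rewrite it as a combination of $\delta^{\leq 3}\omega$ with coefficients $-c_j\, g(\lambda)\, g^i_3$. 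Collecting terms yields the desired expression for $\delta^{i+1}\omega$, with
\begin{equation*}
g^{i+1}_j \;=\; \lambda\partial_\lambda g^i_j \;+\; g^i_{j-1} \;-\; c_j\, g(\lambda)\, g^i_3
\end{equation*}
(under the convention $g^i_{-1}=0$ and with the understanding that the $g^i_{j-1}$ term is absent when $j=0$).

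It remains to check that each $g^{i+1}_j$ lies in $R$. The ideal $R$ is preserved by $\lambda\partial_\lambda$ (since $\lambda\partial_\lambda(\lambda^n) = n\lambda^n$), so $\lambda\partial_\lambda g^i_j \in R$; the term $g^i_{j-1}$ is in $R$ by induction; and $g(\lambda)\, g^i_3$ is a product of two elements of $R$, hence in $R$. So all three summands of $g^{i+1}_j$ lie in $R$, closing the induction. The main (and essentially only) point requiring vigilance is bookkeeping the vanishing at $\lambda=0$ across the two mechanisms by which the coefficients are generated — differentiation by $\lambda\partial_\lambda$, and the multiplication by $g(\lambda)$ that enters through the Picard-Fuchs substitution — but both mechanisms manifestly preserve $R$, so no genuine obstacle arises.
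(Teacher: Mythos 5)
Your proof is correct and follows essentially the same route as the paper: induction on $i$ with base case given by Lemma \ref{pfequation}, applying $\delta$ via the Leibniz rule and substituting the Picard--Fuchs equation for the $\delta^4\omega$ term, then noting that $\lambda\partial_\lambda$, addition, and multiplication by $g(\lambda)$ all preserve the ideal of power series vanishing at $\lambda=0$. The paper's proof is just a terser version of the same argument.
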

\begin{proof}
Proceed by induction.  It is true for $i=4$ by Lemma
\ref{pfequation}.  Now $\delta^{i+1}\omega=\delta(\sum_{j\leqslant
3}g^i_j\delta^j\omega)=\sum_{j\leqslant 3}(\delta
g^i_j)\delta^j\omega+\sum_{j<
3}g^i_j\delta^{j+1}\omega+g^i_3\delta^4\omega$.
\end{proof}

Now we can look at the symplectic pairing $(-,-)$.  Because it is
compatible with the Gauss-Manin connection we need only consider
pairings of the form $(\omega,\delta^i\omega)$.  By Griffiths
transversality $(\omega,\delta^i\omega)=0$ for $i\leq 2$.

\begin{definition}
Let $$Y(\lambda)=(\omega,\delta^3\omega)$$ denote the Yukawa
coupling.\footnote{A good reference for our treatment of the Yukawa
coupling is \cite{coxkatz}.}
\end{definition}

At this point we know only that $Y(\lambda)\in\cp((\lambda))$, but
in fact we can say more.  For example, $(-,-)$ is non-degenerate on
the span of $\{\delta^i\omega|0\leq i\leq 3\}$ and so
$Y(\lambda)\neq 0$. The following Lemma demonstrates that we can
calculate it up to a constant.  In fact, for our purposes it is
sufficient to prove that $Y(\lambda)$ is a power series that does
not vanish at $\lambda=0$.  For this we need only the fact that
$\lambda$ divides $g^4_3(\lambda)$.

\begin{lemma}\label{yukawa}
The Yukawa coupling $Y(\lambda)$ satisfies the differential equation
$$\partial_\lambda Y(\lambda)=-\dfrac{5^6\lambda^4}{5^5\lambda^5+1}Y(\lambda)$$ and so
$Y(\lambda)\in\cp[[\lambda]]^\times$, i.e. $Y(\lambda)$ is a power
series in $\lambda$ with a non-zero constant term.
\end{lemma}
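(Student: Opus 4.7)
The plan is to differentiate $Y(\lambda)=(\omega,\delta^3\omega)$ using the compatibility of the symplectic pairing with the Gauss-Manin connection, expand $\delta^4\omega$ via the Picard-Fuchs equation (Lemma \ref{pfequation}), and then kill all the extraneous terms via Griffiths transversality. What remains must be an identity relating $\delta Y$ to $Y$, and this will be the claimed first-order ODE.

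Concretely, by the Leibniz rule for the pairing, $\delta Y = (\delta\omega, \delta^3\omega) + (\omega,\delta^4\omega)$. For the second term, substituting the Picard-Fuchs equation and using $(\omega,\delta^i\omega)=0$ for $i\leq 2$ (Griffiths transversality) leaves only $(\omega,\delta^4\omega) = -10 g(\lambda) Y(\lambda)$. For the first term, I would first evaluate $(\delta\omega,\delta^2\omega)$: applying the Leibniz rule to the vanishing quantity $(\omega,\delta^2\omega)$ gives $0 = (\delta\omega,\delta^2\omega) + (\omega,\delta^3\omega)$, so $(\delta\omega,\delta^2\omega) = -Y$. Differentiating this and using the antisymmetry of the symplectic form (so that $(\delta^2\omega,\delta^2\omega)=0$) yields $(\delta\omega,\delta^3\omega) = \delta(\delta\omega,\delta^2\omega) = -\delta Y$.

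Combining the two contributions gives $\delta Y = -\delta Y - 10 g(\lambda) Y$, hence $\delta Y = -5 g(\lambda) Y$. Converting $\delta = \lambda\partial_\lambda$ and substituting $g(\lambda) = 5^5\lambda^5/(5^5\lambda^5+1)$ produces exactly the stated differential equation $\partial_\lambda Y = -\dfrac{5^6\lambda^4}{5^5\lambda^5+1} Y$.

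Finally, for the assertion $Y(\lambda)\in\cp[[\lambda]]^\times$: the coefficient $-5g(\lambda)$ is a power series in $\lambda$ vanishing to order $5$ (this is precisely the input that $\lambda \mid g^4_3(\lambda)$). Writing a tentative expansion $Y(\lambda) = c\lambda^k + \cdots$ with $c\neq 0$ and $k\in\mathbb{Z}$, the relation $\lambda\partial_\lambda Y = -5g(\lambda) Y$ forces $ck\lambda^k = O(\lambda^{k+5})$, so $k=0$. Thus $Y$ is a power series, and $Y(0) = c \neq 0$ because $Y(\lambda)$ is nonzero in $\cp((\lambda))$ by the non-degeneracy of $(-,-)$ on the span of $\{\delta^i\omega\}_{i\leq 3}$. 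The only delicate point is bookkeeping signs and making sure each use of Leibniz is paired with the correct vanishing from Griffiths transversality; no substantial obstacle arises.
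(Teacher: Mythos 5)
Your argument is correct and is essentially the paper's own: both proofs differentiate the Griffiths-transversality identity $(\omega,\delta^2\omega)=0$ twice, use $(\delta^2\omega,\delta^2\omega)=0$ and the Picard--Fuchs equation to reduce everything to $\delta Y=-5g(\lambda)Y$, and then read off that $Y$ is a unit power series (you merely package the second differentiation as $(\delta\omega,\delta^3\omega)=-\delta Y$ and solve $2\delta Y=-10gY$, rather than writing $(\delta\omega,\delta^3\omega)=-\tfrac12(\omega,\delta^4\omega)$ directly). Your added order-of-vanishing argument for $Y\in\cp[[\lambda]]^\times$ correctly fills in the paper's ``and so.''
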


\begin{proof}
As explained above $(\omega,\delta^2\omega)=0$, so that
$0=\delta^2(\omega,\delta^2\omega)=(\delta^2\omega,\delta^2\omega)+2(\delta\omega,\delta^3\omega)+(\omega,
\delta^4\omega)$ and
$$(\delta\omega,\delta^3\omega)=-\dfrac{1}{2}(\omega,
\delta^4\omega).$$  Then $$\delta
Y(\lambda)=(\delta\omega,\delta^3\omega)+(\omega,
\delta^4\omega)=\dfrac{1}{2}(\omega,
\delta^4\omega)=\dfrac{1}{2}(\omega,g(\lambda)(-10)\delta^3\omega)=-5g(\lambda)
Y(\lambda).$$
\end{proof}

\section{The coefficients $c^\alpha_I$ and the case of $\lambda=0$}
Since $\delta^{i\leqslant 3}\omega$ is a basis of $\h_0$ and
$(-,-)_0$ is non-degenerate, to compute the Frobenius matrix at
$\lambda=0$ it is sufficient to compute
$(Fr(\delta^i\omega),\delta^j\omega)_0$.  Because we can express
$\delta^i\omega$ in terms of $(xt)^j$ and $Fr((xt)^j dx dt)$, as we
have seen, is expressed in terms of $\omega_I$, it is important
to compute $(\omega_I,\delta^j\omega)_0$.  We begin this below.

\begin{definition}
Let $$c_I^\alpha=\frac{(-1)^I
\pi^I}{Y}(\omega_I,\delta^\alpha\omega)_0$$ which is non-trivial
only for $\alpha=0,1,2,3$ by Corollary \ref{pfcor}.
\end{definition}

We now translate the results of the previous section into relations
on $c^\alpha_I$ which are needed to compute the matrix elements of
$Fr$ at $\lambda=0$.

\begin{theorem}\label{c-rel}
The constants $c_I^\alpha$ are determined by symmetry in $i,j,k,n,m$
and the relations
\begin{equation}\label{ci-red}
c^\alpha_{(i+1)jknms}=i
c^\alpha_{ijknms}+\frac{1}{5}c^{\alpha+1}_{ijknms}
\end{equation}
and
\begin{equation}\label{cs-red}
c^\alpha_{ijknm(s+1)}=(s+1)c^\alpha_{ijknms}-c^{\alpha+1}_{ijknms}
\end{equation}
as well as the conditions that $$c_I^{\alpha>3}=0$$ and
$$c_{000000}^3=1.$$  Furthermore  the above holds for $c_I^\alpha$.
\end{theorem}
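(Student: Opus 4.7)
The plan is to convert the operator identities on the $\omega_I$ established in Corollary \ref{i-rel} and Lemma \ref{s-rel} into identities on the constants $c_I^\alpha$ by pairing with $\delta^\alpha\omega$ at $\lambda=0$. The key algebraic tool is the antisymmetry
$$(\delta u,v)_0 = -(u,\delta v)_0,$$
valid for sections $u,v$ of $\h$ because $(u,v)\in\cp[[\lambda]]$ and $\delta$ kills the constant term of a power series; this is exactly the observation already made in the footnote to the non-degeneracy lemma.

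First I would derive (\ref{ci-red}). I would rearrange Corollary \ref{i-rel} to $5\pi\,\omega_{(i+1)jknms} = \delta\omega_{ijknms} - 5i\,\omega_{ijknms}$, take $(-,\delta^\alpha\omega)_0$ of both sides, use the antisymmetry to replace $(\delta\omega_{ijknms},\delta^\alpha\omega)_0$ by $-(\omega_{ijknms},\delta^{\alpha+1}\omega)_0$, and then multiply by the normalization $\frac{(-1)^{I+1}\pi^{I+1}}{Y}$ appropriate to the new index (whose sum is $I+1$). The extra factor of $\pi$ cancels the $5\pi$, while the sign flip absorbs the minus produced by the antisymmetry, yielding $c^\alpha_{(i+1)jknms} = i\,c^\alpha_{ijknms} + \tfrac{1}{5}c^{\alpha+1}_{ijknms}$. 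The derivation of (\ref{cs-red}) is entirely parallel, starting from the rearrangement $\pi\,\omega_{ijknm(s+1)} = -\delta\omega_{ijknms} - (s+1)\omega_{ijknms}$ of Lemma \ref{s-rel}.

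The boundary conditions are then immediate: $c_I^{\alpha>3}=0$ is Corollary \ref{pfcor}, which says $\delta^\alpha\omega$ already vanishes in $\h_0$ for $\alpha>3$, and $c^3_{000000}=1$ is the definition unpacked, using $(\omega,\delta^3\omega)_0 = Y$ from Lemma \ref{yukawa} together with $(-1)^0\pi^0 = 1$.

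To see that these data actually pin down every $c_I^\alpha$, I would apply (\ref{cs-red}) iteratively to drive the $s$-index down to $0$ (raising $\alpha$ by at most one at each step), then (\ref{ci-red}) in turn on each of $i,j,k,n,m$; the process terminates because once $\alpha$ exceeds $3$ every $c$ vanishes, and everything is reduced to $c^{\alpha'}_{000000}$ for various $\alpha'\geq\alpha$. Together with $c^3_{000000}=1$ and $c^{\alpha'<3}_{000000}=0$ (the latter coming from the Griffiths transversality identity $(\omega,\delta^{\alpha'}\omega)=0$ for $\alpha'\leq 2$, recorded just before the definition of the Yukawa coupling), the recursion uniquely determines every $c_I^\alpha$. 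The only step requiring real care is the sign and $\pi$-power bookkeeping at the reindexing stage; no deeper obstacle appears.
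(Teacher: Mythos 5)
Your proposal is correct and follows essentially the same route as the paper: pair the reduction identities of Lemma \ref{s-rel} and Corollary \ref{i-rel} with $\delta^\alpha\omega$, use $(\delta u,v)_0=-(u,\delta v)_0$ to shift $\delta$ across the pairing, and track the $(-1)^I\pi^I$ normalization, with the boundary conditions coming from Corollary \ref{pfcor} and the definition of $Y$. Your explicit remark that the recursion must also be seeded with $c^{\alpha}_{000000}=0$ for $\alpha\leq 2$ (via Griffiths transversality) is a useful clarification that the paper leaves implicit in its closing ``follows by induction.''
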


\begin{proof}
The symmetry of $c_I^\alpha$ follows from the symmetry of
$\omega_I$.  Notice that $c_I^{\alpha>3}$ vanishes since
$\delta^{\alpha>3}\omega$, when written as a linear combination of
$\delta^i\omega$ ($0\leq i\leq 3$), is divisible by $\lambda$.  By
definition $c_0^3=1/Y\cdot(\omega,\delta^3\omega)=1$.

Now we prove the two reduction formulas:
\begin{align*}
c^\alpha_{(i+1)jknms}&=\frac{(-1)^{I+1}\pi^{I+1}}{Y}(\omega_{(i+1)jknms},\delta^\alpha\omega)_0\\
&=\frac{(-1)^{I+1}\pi^{I+1}}{Y}(\frac{1}{5\pi}(\delta\omega_I-5i\omega_I),\delta^\alpha\omega)_0\\
&=\frac{1}{5}c^{\alpha+1}_I+i c^\alpha_I
\end{align*}
and
\begin{align*}
c^\alpha_{ijknm(s+1)}&=\frac{(-1)^{I+1}\pi^{I+1}}{Y}(\omega_{ijknm(s+1)},\delta^\alpha\omega)_0\\
&=\frac{(-1)^{I+1}\pi^{I+1}}{Y}(\frac{1}{\pi}(-(s+1)\omega_I-\delta\omega_I),\delta^\alpha\omega)_0\\
&=(s+1)c^\alpha_I-c_I^{\alpha+1}.
\end{align*}

The result now follows by induction.
\end{proof}

\begin{cor}\label{reductionconsequence}
The coefficient $c^\alpha_I$ with $\#(I)\geq 1$ can be written as a
linear combination of $c^{\alpha+1}_{I_j}$, i.e., $$c_I^\alpha=\sum
a_j c^{\alpha+1}_{I_j}$$ with $$\#(I_j)\geq\#(I)-1.$$
\end{cor}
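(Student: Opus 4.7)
The plan is to apply the $i$-reduction formula (\ref{ci-red}) iteratively, after using the symmetry of $c^\alpha_I$ in the first five indices to arrange matters. Since $\#(I) \geq 1$, some index among $i, j, k, n, m$ is nonzero; by symmetry we may assume it is the first, so $i \geq 1$.

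I would then induct on $i$. The base case $i = 1$ is immediate: applying (\ref{ci-red}) with $i$ replaced by $0$ yields
$$c^\alpha_{1, j, k, n, m, s} = \tfrac{1}{5}\, c^{\alpha+1}_{0, j, k, n, m, s},$$
and the multi-index $(0, j, k, n, m, s)$ has $\# = \#(I) - 1$, which meets the required bound.

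For the inductive step, suppose the conclusion holds for $(i, j, k, n, m, s)$ with $i \geq 1$. By (\ref{ci-red}),
$$c^\alpha_{(i+1), j, k, n, m, s} = i\, c^\alpha_{i, j, k, n, m, s} + \tfrac{1}{5}\, c^{\alpha+1}_{i, j, k, n, m, s}.$$
The inductive hypothesis rewrites the first summand as a linear combination of terms $c^{\alpha+1}_{I_\ell}$ with $\#(I_\ell) \geq \#((i, j, k, n, m, s)) - 1$; since $i$ and $i+1$ are both positive, $\#((i, j, k, n, m, s)) = \#((i+1, j, k, n, m, s))$, so the desired bound holds. The second summand is already of the required form with $\#$-invariant equal to that of $(i+1, j, k, n, m, s)$. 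Combining completes the induction.

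This is a routine application of the reduction formulas of Theorem \ref{c-rel}, so there is no substantial obstacle. The only point worth tracking is that the $\#$ statistic does not degrade beyond $\#(I) - 1$ across the induction, which is guaranteed by the identity $\#((i, \ldots)) = \#((i+1, \ldots))$ whenever $i \geq 1$; this is exactly why the base case, where the first index drops from $1$ to $0$, is the unique step that actually reduces $\#$.
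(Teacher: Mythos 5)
Your proof is correct and follows the same route as the paper: the paper's proof simply cites the reduction formula (\ref{ci-red}), and you have spelled out the induction on the first (nonzero, after using symmetry) index that makes ``follows immediately'' rigorous, correctly noting that $\#$ drops only at the final step $i=1\to 0$ while the term $\tfrac{1}{5}c^{\alpha+1}_{ijknms}$ already has the required form.
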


\begin{proof}
This follows immediately from the reduction Equation \ref{ci-red}
above.
\end{proof}

\subsection{Images of basis elements in cohomology}
We would like to have, more or less, explicit formulas for the
constants $c^\alpha_I$ for $\alpha=3,2,1,0$ (the order is in terms
of difficulty).  We begin with some definitions.

\begin{definition}
Let $\Dz$ be the formal differential operator defined by
$$\Dz=\sum_{i=0}^\infty\partial_x^i.$$
\end{definition}

\begin{definition}
For $\alpha$ and $\beta$ non-negative integers, let
$\D{\alpha}{\beta}_x$ be a differential operator defined as follows:
$$\D{\alpha}{\beta}_x=\sum\partial_x...\frac{1}{x}...\frac{1}{x}...\partial_x$$
more precisely, it is the sum of all the  possible words
of length $\beta$ in the letters $\partial_x$ and $\frac{1}{x}$ with
exactly $\alpha$ of the letters being $\frac{1}{x}$.  Thus
$$\D{\alpha}{\beta}_x=0\quad\text{if}\quad\alpha>\beta.$$
\end{definition}

\begin{remark} For example $\D{0}{3}_x=\partial_x^3$,
$\D{1}{3}_x=\partial_x^2\frac{1}{x}+\partial_x \frac{1}{x}
\partial_x+\frac{1}{x}\partial_x^2$, $\D{2}{3}_x=\partial_x(\frac{1}{x})^2+\frac{1}{x}\partial_x\frac{1}{x}
+(\frac{1}{x})^2\partial_x$, and $\D{3}{3}_x=(\frac{1}{x})^3$,
etc. \end{remark}

\begin{definition}
Define the integers $\D{\alpha}{\beta}$ by
$$\D{\alpha}{\beta}=\D{\alpha}{\beta}_x x^\beta.$$  Equivalently,
$$\D{\alpha}{\beta}=\left[\left(\Dz\frac{1}{x}\right)^\alpha\Dz x^\beta\right]_0$$ where
$[g]_0$ for $g\in\cp[[z^{\pm 1}]]$ denotes its degree zero
coefficient.
\end{definition}

Notice that $\D{0}{\beta}=\beta!$ and $\D{\alpha}{\alpha}=1$.

\begin{remark}
The integers $\D{\alpha}{\beta}$ arise naturally in our computation
due to the following observation.  Define $S^{\alpha}(\beta)$
recursively by $S^0(\beta)=1$ and
$$S^\alpha(\beta)=\displaystyle\sum_{i=1}^{\beta-1}\dfrac{S^{\alpha-1}(i)}{i}.$$
Observe that this  makes sense only for $\beta>\alpha$. For
$\beta\leq\alpha$ set $S^\alpha(\beta)$ to be $0$. Then
$$\D{\alpha}{\beta-1}=(\beta-1)! S^\alpha(\beta).$$
\end{remark}

\begin{definition}
Define the formal differential operator $\D{\alpha}{}_x$ by
$$\D{\alpha}{}_x=\sum_{\beta}\D{\alpha}{\beta}_x=\left(\Dz\frac{1}{x}\right)^\alpha\Dz.$$
\end{definition}

The key property of $\D{\alpha}{\beta}$'s is demonstrated in the
following Lemma.

\begin{lemma}\label{keylemma}
The integers $\D{\alpha}{\beta}$ satisfy the relation:
$$\D{\alpha}{\beta}=\beta\cdot\D{\alpha}{\beta-1}+\D{\alpha-1}{\beta-1}$$
for $\beta>0$.  We set $\D{\alpha}{\beta}=0$ for $\alpha<0$.
\end{lemma}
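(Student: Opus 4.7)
The plan is to first lift the identity to the level of the defining differential operators $\D{\alpha}{\beta}_x$, and then apply it to $x^\beta$. Every length-$\beta$ word in the letters $\{\partial_x, \tfrac{1}{x}\}$ containing exactly $\alpha$ copies of $\tfrac{1}{x}$ ends either in $\partial_x$ or in $\tfrac{1}{x}$. Grouping the sum defining $\D{\alpha}{\beta}_x$ according to the rightmost letter gives the operator identity
$$\D{\alpha}{\beta}_x \;=\; \D{\alpha}{\beta-1}_x\,\partial_x \;+\; \D{\alpha-1}{\beta-1}_x\,\frac{1}{x},$$
since a word ending in $\partial_x$ is uniquely a length-$(\beta-1)$ word containing $\alpha$ copies of $\tfrac{1}{x}$ followed by $\partial_x$, while a word ending in $\tfrac{1}{x}$ is uniquely a length-$(\beta-1)$ word containing $\alpha-1$ copies of $\tfrac{1}{x}$ followed by $\tfrac{1}{x}$.

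Next I would evaluate both sides on $x^\beta$. Using $\partial_x x^\beta = \beta x^{\beta-1}$ and $\tfrac{1}{x}x^\beta = x^{\beta-1}$, one obtains
$$\D{\alpha}{\beta}_x\, x^\beta \;=\; \beta\,\D{\alpha}{\beta-1}_x\, x^{\beta-1} \;+\; \D{\alpha-1}{\beta-1}_x\, x^{\beta-1},$$
and the definition $\D{\alpha}{\beta} = \D{\alpha}{\beta}_x x^\beta$ then gives exactly the claimed recursion $\D{\alpha}{\beta} = \beta\cdot\D{\alpha}{\beta-1} + \D{\alpha-1}{\beta-1}$. The boundary convention $\D{\alpha}{\beta}=0$ for $\alpha<0$ is automatically consistent with this decomposition: when $\alpha=0$, no word of length $\beta$ contains $-1$ copies of $\tfrac{1}{x}$, so the second term in the operator identity (and hence in the recursion) is vacuous.

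There is no real technical obstacle here; the entire content is the bookkeeping behind the ``split by the last letter'' bijection. The only points worth double-checking are that the length and the count of $\tfrac{1}{x}$ letters are tracked correctly in both branches, and that the formal operator identity remains valid when evaluated on the Laurent monomial $x^\beta$, which is clear since $\partial_x$ and $\tfrac{1}{x}$ both preserve the ring of Laurent polynomials in $x$.
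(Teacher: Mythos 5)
Your proof is correct and follows essentially the same route as the paper: split each word by its last letter to get the operator identity $\D{\alpha}{\beta}_x = \D{\alpha}{\beta-1}_x\partial_x + \D{\alpha-1}{\beta-1}_x\frac{1}{x}$, then evaluate on $x^\beta$. The only cosmetic difference is that the paper checks the boundary cases $\beta=\alpha$ and $\alpha=0$ separately via the explicit values $\D{\alpha}{\alpha}=1$ and $\D{0}{\beta}=\beta!$, whereas you observe (correctly) that the word-splitting already degenerates appropriately there.
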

\begin{proof}
For $\beta>\alpha>0$ we have
$$\D{\alpha}{\beta}=\D{\alpha}{\beta}_x x^{\beta}=\D{\alpha}{\beta-1}_x\partial_x x^\beta+\D{\alpha-1}{\beta-1}_x
\frac{1}{x}x^\beta=\beta\cdot\D{\alpha}{\beta-1}+\D{\alpha-1}{\beta-1}.$$
If $\beta=\alpha>0$ then
$\D{\alpha}{\alpha}=1=\D{\alpha-1}{\alpha-1}$ and
$\alpha\cdot\D{\alpha}{\alpha-1}=0$.  If $\beta>\alpha=0$ then
$\D{0}{\beta}=\beta!=\beta(\beta-1)!=\beta\cdot\D{0}{\beta-1}$ and
$\D{-1}{\beta-1}=0$.
\end{proof}

\begin{definition}
For $s\geq 0$ and $i\geq0$ let
$$\Ds{\alpha}{s}=(-1)^\alpha\cdot\D{\alpha}{s}$$ and
$$\Di{\alpha}{i}=\begin{cases}
\dfrac{1}{5^{\alpha+1}}\cdot\D{\alpha}{i-1} & i>0\\
\delta_{\alpha, 0} & i=0.\end{cases}$$
\end{definition}

Note that $\Di{\alpha}{1}=\dfrac{1}{5}\delta_{\alpha,
0}=\dfrac{1}{5}\Di{\alpha}{0}$.

The following is an immediate corollary of Lemma \ref{keylemma} and
is the main ingredient in the proof of Theorem \ref{huge}.
\begin{cor} For $s\geq 0$ and $i>0$,
\begin{equation}
\Di{\alpha}{i+1}=i\cdot\Di{\alpha}{i}+\frac{1}{5}\cdot\Di{\alpha-1}{i}
\end{equation}
and
\begin{equation}
\Ds{\alpha}{s+1}=(s+1)\cdot\Ds{\alpha}{s}-\Ds{\alpha-1}{s}.
\end{equation}
\end{cor}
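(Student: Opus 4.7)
The statement is flagged as an immediate corollary of Lemma \ref{keylemma}, so the plan is simply to unfold the definitions of $\Di{\alpha}{i}$ and $\Ds{\alpha}{s}$, invoke the three-term recursion
$$\D{\alpha}{\beta}=\beta\cdot\D{\alpha}{\beta-1}+\D{\alpha-1}{\beta-1},$$
and match terms against the new normalizations.

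For the $\Di$ identity, the plan is to start from $\Di{\alpha}{i+1}=\frac{1}{5^{\alpha+1}}\D{\alpha}{i}$, apply Lemma \ref{keylemma} with $\beta=i$ (permissible since the hypothesis $i>0$ ensures $\beta\geq 1$), and obtain
$$\Di{\alpha}{i+1}=\frac{i}{5^{\alpha+1}}\D{\alpha}{i-1}+\frac{1}{5^{\alpha+1}}\D{\alpha-1}{i-1}.$$
The first summand is $i\cdot\Di{\alpha}{i}$ by definition, and the second equals $\frac{1}{5}\cdot\frac{1}{5^{\alpha}}\D{\alpha-1}{i-1}=\frac{1}{5}\Di{\alpha-1}{i}$, giving the desired formula. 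Note that for $i=1$ one is using the $i=1>0$ case of the defining formula $\Di{\alpha-1}{i}=\frac{1}{5^{\alpha}}\D{\alpha-1}{0}$, which equals $\frac{1}{5}\delta_{\alpha-1,0}=\frac{1}{5}\Di{\alpha-1}{0}$ and is consistent with the degenerate $i=0$ case, so no separate check is required.

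For the $\Ds$ identity, I would begin with $\Ds{\alpha}{s+1}=(-1)^{\alpha}\D{\alpha}{s+1}$ and apply Lemma \ref{keylemma} with $\beta=s+1\geq 1$ to get
$$\Ds{\alpha}{s+1}=(s+1)(-1)^{\alpha}\D{\alpha}{s}+(-1)^{\alpha}\D{\alpha-1}{s}.$$
The first summand is $(s+1)\Ds{\alpha}{s}$, while the sign flip $(-1)^{\alpha}=-(-1)^{\alpha-1}$ rewrites the second as $-\Ds{\alpha-1}{s}$, finishing the proof.

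There is no real obstacle; the work lies entirely in Lemma \ref{keylemma}, and the only thing to be careful about is bookkeeping at the boundary (checking that the $i=1$ case of the $\Di$ recursion is consistent with the piecewise definition of $\Di{\alpha}{0}$), which the normalization $\Di{\alpha}{1}=\frac{1}{5}\delta_{\alpha,0}$ already encodes.
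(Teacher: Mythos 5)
Your proof is correct and follows exactly the route the paper intends: the paper states this as an immediate consequence of Lemma \ref{keylemma} without writing out the details, and your unfolding of the definitions of $\Di{\alpha}{i}$ and $\Ds{\alpha}{s}$ (including the sign flip $(-1)^{\alpha}=-(-1)^{\alpha-1}$ and the boundary check that $\Di{\alpha}{1}=\frac{1}{5}\delta_{\alpha,0}$ is consistent with the $i=0$ case) is precisely the omitted verification.
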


Theorems \ref{pre-huge} and \ref{huge} describe the coefficients
$c^\alpha_I$ in a way that will be useful to us.

\begin{theorem}\label{pre-huge}
Most $c^\alpha_I$s vanish.  More precisely,
$$c^\alpha_I=0\quad\text{for}\quad\alpha+\#(I)>3.$$
\end{theorem}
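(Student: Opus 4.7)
My plan is to prove Theorem \ref{pre-huge} by downward induction on $\alpha$, exploiting the structure of Corollary \ref{reductionconsequence}. That corollary converts a $c^\alpha_I$ with $\#(I)\geq 1$ into a linear combination of terms $c^{\alpha+1}_{I_j}$ with $\#(I_j) \geq \#(I) - 1$, so the combinatorial invariant $\alpha + \#(I)$ is monotone nondecreasing under the reduction. Combined with the base vanishing $c^{\alpha>3}_I = 0$ from Theorem \ref{c-rel}, this should propagate $c^\alpha_I = 0$ throughout the region $\alpha + \#(I) > 3$.

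In more detail, the base case is $\alpha \geq 4$, where $c^\alpha_I = 0$ for every $I$ by Theorem \ref{c-rel}. For the inductive step, I fix $\alpha \leq 3$ and assume $c^{\alpha+1}_{I'} = 0$ whenever $(\alpha+1) + \#(I') > 3$. Given $I$ with $\alpha + \#(I) > 3$, the constraint $\alpha \leq 3$ forces $\#(I) \geq 1$, so Corollary \ref{reductionconsequence} applies and expresses $c^\alpha_I = \sum_j a_j c^{\alpha+1}_{I_j}$ with each $\#(I_j) \geq \#(I) - 1$. Then $(\alpha+1) + \#(I_j) \geq \alpha + \#(I) > 3$, so the inductive hypothesis annihilates every summand, giving $c^\alpha_I = 0$.

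There is no substantive obstacle here: once the correct quantity $\alpha + \#(I)$ is identified as the induction invariant, the proof reduces to verifying a one-line arithmetic inequality. The conceptual point is that Corollary \ref{reductionconsequence}, itself an immediate consequence of the $i$-reduction formula (\ref{ci-red}), is precisely calibrated so that trading a single unit of $\alpha$ (gained) against at most a single unit of $\#(I)$ (lost) preserves or increases this invariant. That monotonicity is exactly what allows the downward induction to sweep the base-case vanishing across the entire forbidden region $\alpha + \#(I) > 3$.
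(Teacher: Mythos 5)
Your proposal is correct and follows essentially the same route as the paper: downward induction on $\alpha$ from the base case $c^{\alpha>3}_I=0$, using Corollary \ref{reductionconsequence} and the inequality $(\alpha+1)+\#(I_j)\geq\alpha+\#(I)>3$. Your explicit observation that $\alpha\leq 3$ together with $\alpha+\#(I)>3$ forces $\#(I)\geq 1$ (so the corollary indeed applies) is a small point the paper leaves implicit.
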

\begin{proof}
Recall that $c^{\alpha>3}_I=0$ so the theorem holds in these cases.
We proceed by induction on $\alpha$.  Let $c_I^\alpha$ be such that
$\alpha+\#(I)>3$.  But by Corollary \ref{reductionconsequence},
$$c_I^\alpha=\sum a_j c^{\alpha+1}_{I_j}$$ and
$\#(I_j)+\alpha+1\geq\#(I)-1+\alpha+1>3$ so that
$c^{\alpha+1}_{I_j}$s and thus $c_I^\alpha$ vanish.
\end{proof}

\begin{definition}
Define integers $\chi=\chi_{\alpha,I}$ by
$$\chi_{\alpha,I}=3-\#(I)-\alpha.$$  Thus $c^\alpha_I=0$ if
$\chi_{\alpha,I}<0$.
\end{definition}

\begin{theorem}\label{huge}
The non-vanishing $c^\alpha_I$s are explicitly described below.
$$
\xymatrix@=.5pt{
c_s^3=\Ds{0}{s} & & & \\
c_s^2=\Ds{1}{s} & c_{is}^2=\Ds{0}{s}\cdot\Di{0}{i} & &\\
c_s^1=\Ds{2}{s} &
c_{is}^1=\!\!\!\!\displaystyle\sum_{\alpha+\beta=1}\!\!\!\!\Ds{\alpha}{s}\cdot\Di{\beta}{i}
& c_{ijs}^1
=\Ds{0}{s}\cdot\Di{0}{i}\cdot\Di{0}{j} & \\
c_s^0=\Ds{3}{s} &
c_{is}^0=\!\!\!\!\displaystyle\sum_{\alpha+\beta=2}\!\!\!\!\Ds{\alpha}{s}\cdot\Di{\beta}{i}
& c_{ijs}^0
=\!\!\!\!\!\!\!\displaystyle\sum_{\alpha+\beta+\gamma=1}\!\!\!\!\!\!\!\Ds{\alpha}{s}\cdot\Di{\beta}{i}\cdot\Di{\gamma}{j}
& c^0_{ijks}=\Ds{0}{s}\cdot\Di{0}{i}\cdot\Di{0}{j}\cdot\Di{0}{k}}
$$  More compactly, let $\chi=\chi_{\alpha,I}$ then
\begin{equation}\label{c-equation}
c^\alpha_I=c^\alpha_{ijknms}=\!\!\!\!\!\!\!
\sum_{\gamma+\beta_1+...+\beta_5=\chi}
\!\!\!\!\!\!\!\Ds{\gamma}{s}\cdot\Di{\beta_1}{i}\cdot\Di{\beta_2}{j}\cdot\Di{\beta_3}{k}\cdot\Di{\beta_4}{n}\cdot
\Di{\beta_5}{m}.
\end{equation}
\end{theorem}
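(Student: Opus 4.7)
The plan is to prove the theorem by induction, exploiting the fact that the recursions for $\Di{}{}$ and $\Ds{}{}$ established in the corollary immediately above exactly parallel the recursions for $c^\alpha_I$ in Theorem \ref{c-rel}. Let $F^\alpha_I$ denote the right-hand side of Equation \ref{c-equation}. The strategy is to show $c^\alpha_I = F^\alpha_I$ by checking that $F$ satisfies the same defining relations as $c$ and agrees on the base case; since Theorem \ref{c-rel} tells us these relations together with the value at $I=0$ determine $c^\alpha_I$ uniquely, this suffices.

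First, I would handle the base case $I=(0,\ldots,0)$. Here $\chi=3-\alpha$, and since $\Di{\beta}{0}=\delta_{\beta,0}$, the only surviving term in the sum has $\beta_j=0$ for every $j$, giving $F^\alpha_{000000}=\Ds{3-\alpha}{0}$. By the definition $\Ds{\gamma}{0}=(-1)^\gamma \D{\gamma}{0}$, this equals $1$ when $\alpha=3$ and vanishes otherwise, matching $c^3_0=1$ (from Theorem \ref{c-rel}) and $c^\alpha_0=0$ for $\alpha<3$ (which holds because $(\omega,\delta^\alpha\omega)_0=0$ for $\alpha\leq 2$ by Griffiths transversality). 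Moreover, when $\chi<0$ the sum defining $F^\alpha_I$ is empty, hence zero, consistent with Theorem \ref{pre-huge}.

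Next, I would verify that $F$ satisfies the two recursions (Equations \ref{ci-red} and \ref{cs-red}). For the $s$-recursion, substitute $\Ds{\gamma}{s+1}=(s+1)\Ds{\gamma}{s}-\Ds{\gamma-1}{s}$ into the sum for $F^\alpha_{ijknm(s+1)}$; the first piece regroups as $(s+1)F^\alpha_{ijknms}$, and after reindexing $\gamma\mapsto\gamma+1$ the second piece becomes $-F^{\alpha+1}_{ijknms}$, using that raising $\alpha$ by one lowers the summation constraint $\chi$ by one. The $i$-recursion for $i>0$ is entirely analogous via $\Di{\beta}{i+1}=i\Di{\beta}{i}+\tfrac{1}{5}\Di{\beta-1}{i}$; here $\chi$ is unchanged because $\#(I)$ does not change when $i$ moves between strictly positive values.

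The step I expect to be the main obstacle is the boundary transition from $i=0$ to $i=1$, where $\#(I)$ jumps by one and so $\chi$ drops by one; the $\Di{}{}$-recursion in the corollary is stated only for $i>0$, so this case requires direct verification. Using $\Di{\beta}{0}=\delta_{\beta,0}$ and $\Di{\beta}{1}=\tfrac{1}{5}\delta_{\beta,0}$ one checks by hand that $F^\alpha_{1jknms}=\tfrac{1}{5}F^{\alpha+1}_{0jknms}$, which is precisely what Equation \ref{ci-red} requires at $i=0$ (the $i\,c^\alpha$ term vanishes). Once this boundary is handled, a straightforward induction closes the argument: each application of Equation \ref{ci-red} or \ref{cs-red} either strictly decreases $\sum i_j+s$ (via the $i\,c^\alpha$ or $(s+1)c^\alpha$ term) or increases $\alpha$ by one, and $c^{\alpha>3}_I=0$ provides the top boundary, so all $c^\alpha_I$ are reached from the base case in finitely many steps.
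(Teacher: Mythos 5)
Your proposal is correct and follows essentially the same route as the paper: verify that the right-hand side of Equation \ref{c-equation} satisfies the recursions and conditions of Theorem \ref{c-rel}, using the corollary's recursions for $\Ds{\gamma}{s}$ and $\Di{\beta}{i}$, and handling the $i=0\to 1$ transition (where $\#(I)$ jumps) by direct verification with $\Di{\beta}{1}=\tfrac{1}{5}\Di{\beta}{0}$. If anything, you are slightly more careful than the paper in spelling out the base cases $c^\alpha_{000000}=0$ for $\alpha<3$ via Griffiths transversality, which the paper leaves implicit in the uniqueness claim of Theorem \ref{c-rel}.
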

\begin{proof}
It is sufficient to show that the Equation \ref{c-equation} above
satisfies the relations and conditions of the Theorem \ref{c-rel}.

Clearly the Equation \ref{c-equation} is symmetric in $i,j,k,n,m$.
If $\alpha>3$ then $\chi<0$ and so the sum is over $\emptyset$ and
thus is $0$; we conclude that the formula holds for
$c_I^{\alpha>3}$. The coefficient $c^3_0=\Ds{0}{0}=1$ as expected.

Let us verify Equation \ref{cs-red}.  Consider $s\geq 0$, note that
$\#(\{i,j,k,n,m,s+1\})=\#(\{i,j,k,n,m,s\})$.  Then
\begin{align*}
c^\alpha_{ijknm(s+1)}&=\!\!\!\!\!\!\!
\sum_{\gamma+\beta_1+...+\beta_5=\chi}
\!\!\!\!\!\!\!\Ds{\gamma}{s+1}\cdot\Di{\beta_1}{i}...
\Di{\beta_5}{m}\\
&=(s+1)\!\!\!\!\!\!\! \sum_{\gamma+\beta_1+...+\beta_5=\chi}
\!\!\!\!\!\!\!\Ds{\gamma}{s}\cdot\Di{\beta_1}{i}...
\Di{\beta_5}{m}-\!\!\!\!\!\!\!
\sum_{\gamma+\beta_1+...+\beta_5=\chi}
\!\!\!\!\!\!\!\Ds{\gamma-1}{s}\cdot\Di{\beta_1}{i}...
\Di{\beta_5}{m}\\
&=(s+1)c^\alpha_{ijknms}-\!\!\!\!\!\!\!
\sum_{\gamma+\beta_1+...+\beta_5=\chi-1}
\!\!\!\!\!\!\!\Ds{\gamma}{s}\cdot\Di{\beta_1}{i}...
\Di{\beta_5}{m}\\
&=(s+1)c^\alpha_{ijknms}-c^{\alpha+1}_{ijknms}.
\end{align*}

We verify Equation \ref{ci-red} in two steps.

First, assume that $i>0$, and so
$\#(\{i+1,j,k,n,m,s\})=\#(\{i,j,k,n,m,s\})$ and
\begin{align*}
c^\alpha_{(i+1)jknms}&=\!\!\!\!\!\!\!
\sum_{\gamma+\beta_1+...+\beta_5=\chi}
\!\!\!\!\!\!\!\Ds{\gamma}{s}\cdot\Di{\beta_1}{i+1}...
\Di{\beta_5}{m}\\
&=i\!\!\!\!\!\!\! \sum_{\gamma+\beta_1+...+\beta_5=\chi}
\!\!\!\!\!\!\!\Ds{\gamma}{s}\cdot\Di{\beta_1}{i}...
\Di{\beta_5}{m}+\dfrac{1}{5}\!\!\!\!\!\!\!
\sum_{\gamma+\beta_1+...+\beta_5=\chi}
\!\!\!\!\!\!\!\Ds{\gamma}{s}\cdot\Di{\beta_1-1}{i}...
\Di{\beta_5}{m}\\
&=i c^\alpha_{ijknms}+\dfrac{1}{5}\!\!\!\!\!\!\!
\sum_{\gamma+\beta_1+...+\beta_5=\chi-1}
\!\!\!\!\!\!\!\Ds{\gamma}{s}\cdot\Di{\beta_1}{i}...
\Di{\beta_5}{m}\\
&=i c^\alpha_{ijknms}+\frac{1}{5}c^{\alpha+1}_{ijknms}.
\end{align*}
Let $i=0$, note that
$\chi_{\alpha,\{1,j,k,n,m,s\}}=\chi_{\alpha+1,\{0,j,k,n,m,s\}}$, so
that
\begin{align*}
c^\alpha_{1jknms}&=\!\!\!\!\!\!\!
\sum_{\gamma+\beta_1+...+\beta_5=\chi}
\!\!\!\!\!\!\!\Ds{\gamma}{s}\cdot\Di{\beta_1}{1}...
\Di{\beta_5}{m}\\
&=\!\!\!\!\!\!\! \sum_{\gamma+\beta_1+...+\beta_5=\chi}
\!\!\!\!\!\!\!\Ds{\gamma}{s}\cdot\dfrac{1}{5}\delta_{\beta_1,0}...
\Di{\beta_5}{m}\\
&=\dfrac{1}{5}\!\!\!\!\!\!\! \sum_{\gamma+\beta_1+...+\beta_5=\chi}
\!\!\!\!\!\!\!\Ds{\gamma}{s}\cdot\Di{\beta_1}{0}...
\Di{\beta_5}{m}\\
&=\dfrac{1}{5}c^{\alpha+1}_{0jknms}.
\end{align*}
\end{proof}

\section{Calculation of the Frobenius matrix elements}
We are interested primarily in the first row of the Frobenius
matrix. In fact all the other entries can be deduced from general
considerations using the first row.  Namely, the matrix elements are
determined by the data of
$$(Fr\,\delta^i\omega,\delta^j\omega)_0$$ for $0\leq i,j\leq 3$.
However the compatibility of the Frobenius map and the symplectic
structure with the Gauss-Manin connection implies that
$$(Fr\,\delta^i\omega,\delta^j\omega)_0=(\frac{1}{p^i}\delta^i Fr\,\omega,\delta^j\omega)_0=
\frac{(-1)^i}{p^i}(Fr\,\omega,\delta^{i+j}\omega)_0.$$  In
particular the first column, i.e.,
$(Fr\,\delta^i\omega,\delta^3\omega)_0$, is $0$ except for the first
entry.

Unfortunately, but not surprisingly, it is the first column that is
very easy to compute directly. Then things get progressively harder.
We focus on going as far as possible with the direct computation of
the first row.  Then we provide some additional formulas for the
reader interested in computing other matrix elements directly; we hope that the resulting identities involving the coefficients of the Dwork exponential will prove interesting.

\begin{definition}
We will call the power series $$f(x)=\exp(x^p/p+x)=:\sum B_i x^i$$
the Dwork exponential.
\end{definition}

\begin{remark}
Recall that the power series $A(z)=\exp(\pi(z^p-z))$ was used to
define the action of the Frobenius.  Notice that $$A(z)=f(-\pi z).$$
\end{remark}

\begin{lemma}
The Dwork exponential coefficients $B_i$'s and $c^\alpha_I$'s can be
used to compute the first row, i.e.,
$$(Fr\,\omega,\delta^\alpha\omega)_0=-p^5 Y\sum B_i B_j B_k B_n B_m
B_s c^\alpha_{ijknm(s+p-1)}.$$
\end{lemma}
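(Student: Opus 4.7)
The plan is to compute $Fr\,\omega$ explicitly as a series in the $\omega_I$, pair it against $\delta^\alpha\omega$ using the $c^\alpha_I$, and then translate between the coefficients $A_i$ of $A(z)=\exp(\pi(z^p-z))$ and the Dwork coefficients $B_i$ of $f(x)=\exp(x^p/p+x)$.

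First I would apply the formula from Lemma \ref{frobandh} (or equivalently reapply the definition of $Fr$ directly to $dx\,dt$) with $j=0$ to obtain
$$Fr\,\omega = p^6\sum A_i A_j A_k A_n A_m A_s\,\omega_{ijknm(s+p-1)}.$$
Pairing with $\delta^\alpha\omega$ and using the definition
$$c^\alpha_I=\frac{(-1)^I\pi^I}{Y}(\omega_I,\delta^\alpha\omega)_0,$$
applied to the multi-index $I=(i,j,k,n,m,s+p-1)$ of total length $i+j+k+n+m+s+p-1$, gives
$$(Fr\,\omega,\delta^\alpha\omega)_0 = p^6 Y\sum A_i A_j A_k A_n A_m A_s\,\frac{c^\alpha_{ijknm(s+p-1)}}{(-1)^{i+j+k+n+m+s+p-1}\pi^{i+j+k+n+m+s+p-1}}.$$

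Next I would convert $A_i$ to $B_i$ using the Remark $A(z)=f(-\pi z)$, which yields $A_i=(-\pi)^i B_i=(-1)^i\pi^i B_i$. The product $A_i A_j A_k A_n A_m A_s$ therefore contributes a factor of $(-1)^{i+j+k+n+m+s}\pi^{i+j+k+n+m+s}$, which cancels all but the $(p-1)$ powers in the denominator. What remains is the overall factor
$$p^6 Y\cdot\frac{(-1)^{-(p-1)}}{\pi^{p-1}}.$$
Since $p\neq 2$ (an assumption recorded in the Preliminaries) and $p\neq 5$, the prime $p$ is odd so $p-1$ is even, making $(-1)^{-(p-1)}=1$; and by definition of $\pi$ we have $\pi^{p-1}=-p$, so $1/\pi^{p-1}=-1/p$. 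Collecting,
$$(Fr\,\omega,\delta^\alpha\omega)_0 = -p^5 Y\sum B_i B_j B_k B_n B_m B_s\,c^\alpha_{ijknm(s+p-1)},$$
as claimed.

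The only real step is the arithmetic bookkeeping of signs and powers of $\pi$; the argument is otherwise a direct substitution, so there is no serious obstacle once one remembers that $p-1$ is even under the standing hypothesis $p\neq 2$ and that $\pi^{p-1}=-p$ is precisely the relation that converts the $(p-1)$-st power of $\pi$ into the factor $-1/p$.
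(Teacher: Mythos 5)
Your proposal is correct and follows essentially the same route as the paper: expand $Fr\,\omega=p^6\sum A_i\cdots A_s\,\omega_{ijknm(s+p-1)}$ via the formula in Lemma \ref{frobandh} with $j=0$, invert the definition of $c^\alpha_I$, and convert $A_i=(-\pi)^iB_i$ so that the leftover factor $p^6/\pi^{p-1}=-p^5$ (using $p$ odd and $\pi^{p-1}=-p$) produces the stated constant. The sign and $\pi$-power bookkeeping matches the paper's computation exactly.
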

\begin{proof}
Recall from the proof of Lemma \ref{frobandh} that $Fr(\omega)$ can
be written explicitly in terms of the $\omega_I$s.  More
precisely,\begin{align*} (Fr\,\omega,\delta^\alpha\omega)_0&=p^6\sum
A_i ... A_m A_s (\omega_{ijknm(s+p-1)},\delta^\alpha\omega)_0\\
&=\frac{p^6 Y}{\pi^{p-1}}\sum A_i \frac{(-1)^i}{\pi^i} ... A_s
\frac{(-1)^s}{\pi^s}c^\alpha_{ijknm(s+p-1)}\\
&=-p^5 Y\sum B_i B_j B_k B_n B_m B_s c^\alpha_{ijknm(s+p-1)}.
\end{align*}
\end{proof}

Similarly, it is easy to see that
$$(Fr\,(xt)^j,\delta^\alpha\omega)_0=\frac{(-1)^{j+1}p^5 Y}{\pi^{jp}}\sum
B_i B_j B_k B_n B_m B_s c^\alpha_{ijknm(s+jp+p-1)}.$$  One can
express $\delta^i\omega$ in terms of $(xt)^j=\omega_j$ using
Equation \ref{sreduction}, or more directly we can compute the
coefficient of $\omega_j$ in $\delta^i\omega=\sum_{j\leqslant
i}a_j\omega_j$ as follows.  Let $T=-\delta$,
$\gamma_s=(-\pi)^s\omega_s$, then
$T\gamma_s=(s+1)\gamma_s-\gamma_{s+1}$.  It is now easy to
diagonalize $T$ so that $T^i\gamma_0=(-1)^i\delta^i\omega$ can be
computed explicitly. More precisely, if
$$\delta^i\omega=\sum_{j\leqslant i}a^i_j\omega_j$$ then $$a^i_j=(-1)^{i+j}\pi^j\sum_{\alpha+\beta=j}(-1)^{\alpha}
\frac{(\alpha+1)^i}{\alpha!\beta!}.$$

\begin{remark}
The following is an easy observation that will be essential to our
computation.  If $g(x)=\sum a_\beta x^\beta$, then $$\sum a_\beta
\cdot\D{\alpha}{\beta+k}=\left[\D{\alpha}{}_x x^k g(x)\right]_0.$$
\end{remark}

The Lemma below is inspired by \cite{katz}.  We will only need its
simplest case, namely $a=1$ when it is not difficult to check that
\begin{equation}\label{reduction}x^{p-1}f(x)=\partial_x f(x)-f(x)\end{equation} so
that\footnote{One must of course be careful of convergence issues,
however these do not pose a problem here.}
$$\left(\Dz\frac{1}{x}\right)^s x^p
f(x)=-\left(\Dz\frac{1}{x}\right)^{s-1}f(x)$$ since the Equation
\ref{reduction} implies that $\Dz x^{p-1}f(x)=-f(x).$  Thus
\begin{equation}\label{dreduction1}\D{s}{}_x x^{p-1} f(x)=-\D{s-1}{}_x \frac{1}{x}f(x),\quad s\geq
1\end{equation} and \begin{equation}\label{dreduction2}\D{0}{}_x
x^{p-1} f(x)=-f(x).\end{equation} We will sketch a general proof
below.

\begin{lemma}
The Dwork exponential $f$ satisfies the following equation
$$\left(\Dz\frac{1}{x}\right)^s x^{ap}
f(x)=(-1)^a\sum_{i=1}^a
p^{a-i}\cdot\D{i-1}{a-1}\left(\Dz\frac{1}{x}\right)^{s-i} f(x)$$ for
$a\geq 1$.
\end{lemma}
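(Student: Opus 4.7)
The plan is to proceed by induction on $a$. The base case $a=1$ is precisely equation \eqref{dreduction1}, which follows from the ODE $\partial_x f = (x^{p-1}+1)f$ satisfied by the Dwork exponential together with the telescoping identity $\Dz\partial_x = \Dz-1$ (immediate from $\Dz = \sum_{i\geq 0}\partial_x^i$): apply $\Dz$ to $x^{p-1}f = \partial_x f - f$ and iterate with $(\Dz\tfrac{1}{x})^{s-1}$.

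For the inductive step $a\rightsquigarrow a+1$, I would first establish the recursion
$$\left(\Dz\tfrac{1}{x}\right)^s x^{(a+1)p}f \;=\; -\left(\Dz\tfrac{1}{x}\right)^{s-1}x^{ap}f \;-\; ap\left(\Dz\tfrac{1}{x}\right)^s x^{ap}f. \qquad(\star)$$
To derive $(\star)$, differentiate $x^{ap}f$ and use $\partial_x f = (x^{p-1}+1)f$ to get $x^{(a+1)p-1}f = \partial_x(x^{ap}f) - ap\cdot x^{ap-1}f - x^{ap}f$; then apply $\Dz$ and invoke $\Dz\partial_x = \Dz-1$ to obtain $(\star)$ in the case $s=1$. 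The general case of $(\star)$ follows by applying $(\Dz\tfrac{1}{x})^{s-1}$ to both sides.

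Substituting the inductive hypothesis into both terms on the right of $(\star)$ and shifting the summation index $i\mapsto j=i+1$ in the sum coming from $-(\Dz\tfrac{1}{x})^{s-1}x^{ap}f$ so that both sums display the common factor $(\Dz\tfrac{1}{x})^{s-j}f$, the coefficient of $p^{(a+1)-j}(\Dz\tfrac{1}{x})^{s-j}f$ collapses to
$$(-1)^{a+1}\bigl[\D{j-2}{a-1} + a\,\D{j-1}{a-1}\bigr].$$
By Lemma \ref{keylemma} applied with $(\alpha,\beta)=(j-1,a)$, this equals $(-1)^{a+1}\D{j-1}{a}$, which is exactly the asserted coefficient at stage $a+1$. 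The boundary indices $j=1$ and $j=a+1$ are absorbed by the conventions $\D{-1}{a-1}=0$ and $\D{a}{a-1}=0$.

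The main obstacle is purely bookkeeping: setting up $(\star)$ correctly and then aligning the two shifted sums so that Lemma \ref{keylemma} performs the combinatorial merge. No new identities beyond the Dwork ODE, the telescoping $\Dz\partial_x=\Dz-1$, and Lemma \ref{keylemma} are needed.
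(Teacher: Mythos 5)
Your proof is correct, and it is a complete argument where the paper only gives a sketch. The paper follows Katz: for each $a$ it constructs $g_a(x)=H_a(x^p)f(x)$ with $x^{ap-1}f=\partial_x g_a-g_a$, so that $\Dz\,x^{ap-1}f=-g_a$, and then reduces $\left(\Dz\frac{1}{x}\right)^s x^{ap}f$ to terms with smaller $s$ and $a$ via "a very tedious calculation" using the explicit polynomial $H_a$. You instead bypass the $H_a$ entirely: differentiating $x^{ap}f$ with the product rule and the Dwork ODE gives the single uniform two-term recursion $(\star)$ relating level $a+1$ to level $a$ (at exponents $s$ and $s-1$), and the combinatorial merge $\D{j-2}{a-1}+a\,\D{j-1}{a-1}=\D{j-1}{a}$ is exactly Lemma \ref{keylemma} with $(\alpha,\beta)=(j-1,a)$; the boundary cases $j=1$ and $j=a+1$ are handled by the stated vanishing conventions, as you note. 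I verified the index bookkeeping and it is right. What your route buys is that the coefficients $\D{i-1}{a-1}$ emerge automatically from the recursion for the $\D{\alpha}{\beta}$ rather than having to be matched against an explicitly computed $H_a$, and the base case $a=1$ is exactly the paper's Equations \eqref{dreduction1}--\eqref{dreduction2}. Two small points worth making explicit if you write this up: the identity $\Dz\partial_x=\Dz-1$ and the application of $\left(\Dz\frac{1}{x}\right)^{s-1}$ to the $s=1$ case of $(\star)$ both require the same overconvergence caveat the paper flags in its footnote; and the induction should be run for $s\ge a$ so that all exponents $s-i$ appearing in the inductive hypothesis are nonnegative (the step $(a+1,s)$ uses $(a,s)$ and $(a,s-1)$, which is consistent with that restriction).
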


\begin{proof}
The idea, as in \cite{katz}, is to find a $g_a$ such that
$x^{ap-1}f(x)=\partial_x g_a(x)-g_a(x)$.  It is easy to see that
such a $g_a$ can be found, and it is of the form
$g_a(x)=H_a(x^p)f(x)$ where $H_a$ is a polynomial that one can write
down explicitly.  The rest is a very tedious calculation.  It is not
very difficult, using the explicit form of $H_a$, to express
$\left(\Dz\frac{1}{x}\right)^s x^{ap} f(x)$ as a linear combination
of the same objects with strictly smaller $s$ and $a$.  One then
proceeds by induction to reduce each $a$ to $0$.
\end{proof}

\subsection{First column}
We begin with the simplest case which is $(Fr\,
\omega,\delta^3\omega)_0$ and we compute it using the methods discussed above, in
particular Equations \ref{dreduction1} and \ref{dreduction2}.
\begin{align*}(Fr\, \omega,\delta^3\omega)_0&=-p^5Y\sum B_s
c^3_{s+p-1}=-p^5Y\sum B_s\cdot\D{0}{s+p-1}\\&=-p^5 Y\left[\D{0}{}_x
x^{p-1} f(x)\right]_0=p^5 Y[f(x)]_0=p^5 Y\end{align*}  Recall that
we are using the Dwork definition of the Frobenius map that
introduces an extra factor of $p^2$, so that in the standard
convention, the coefficient of $\omega$ in $Fr(\omega)$ is $p^3$.

\subsection{Second column}
Let us turn our attention to the last case where we can obtain an
exact answer by a direct computation.  \emph{This computation is the main motivation for the present paper as it obtains a result that, at least in the case of the mirror quintic, bypasses the theory of motives used in \cite{vologodsky}.}
\begin{align*}
(Fr\, \omega,\delta^2\omega)_0&=-p^5Y\left(\sum B_s c^2_{s+p-1}+5\sum B_i B_s c^2_{i(s+p-1)}\right)\\
&=-p^5Y\left(\sum B_s (-\D{1}{s+p-1})+5\sum B_i B_s \frac{1}{5}\cdot\D{0}{s+p-1}\cdot\D{0}{i-1}\right)\\
&=-p^5Y\left(-\left[\D{1}{}_x x^{p-1}f(x)\right]_0+\left[\D{0}{}_x x^{p-1}f(x)\right]_0\left[\D{0}{}_x \frac{1}{x}f(x)\right]_0\right)\\
&=-p^5Y\left(\left[\D{0}{}_x
\frac{1}{x}f(x)\right]_0-\left[\D{0}{}_x
\frac{1}{x}f(x)\right]_0\right)=0
\end{align*}

Note that the vanishing of this coefficient implies, by the
compatibility of the Frobenius map with the symplectic form, that
$(Fr\, \omega,\delta\omega)_0$ is also $0$, see \cite{ksv}.  However
we want to try to compute it directly in the next section.

\subsection{Third column}
By combining the result of last section with the direct computation
of this one, we obtain an interesting non-linear relation on the
coefficients of the Dwork exponential.  It would be interesting to see if this formula generalizes.  It turns out that the most obvious generalization is false (see below).
\begin{align*}
0=&(Fr\, \omega,\delta\omega)_0\\
&=-p^5Y\left(\sum B_s c^1_{s+p-1}+5\sum B_i B_s c^1_{i(s+p-1)}+\bnm{5}{2}\sum B_i B_j B_s c^1_{ij(s+p-1)}\right)\\
&=-p^5Y\Bigg(\sum B_s \cdot\D{2}{s+p-1}+5\sum B_i B_s
\left\{\frac{1}{5^2}\cdot\D{0}{s+p-1}\cdot\D{1}{i-1}-\frac{1}{5}\cdot\D{1}{s+p-1}\cdot\D{0}{i-1}\right\}\\
&\,\,\,\,\,\,\,\,\,\,\,\,\,\,\,\,\,\,\,\,\,\,\,\,\,\,\,\,\,\,\,\,+10
\sum B_i B_j B_s\frac{1}{5^2}\cdot\D{0}{i-1}\cdot\D{0}{j-1}\cdot\D{0}{s+p-1}\Bigg)\\
&=-p^5Y\Bigg(\left[\D{2}{}_x
x^{p-1}f\right]_0+\frac{1}{5}\left[\D{0}{}_x x^{p-1}f\right]_0
\left[\D{1}{}_x \frac{1}{x}f\right]_0-\left[\D{1}{}_x
x^{p-1}f\right]_0\left[\D{0}{}_x \frac{1}{x}f\right]_0\\
&\,\,\,\,\,\,\,\,\,\,\,\,\,\,\,\,\,\,\,\,\,\,\,\,\,\,\,\,\,\,\,\,+\frac{2}{5}
\left[\D{0}{}_x x^{p-1}f\right]_0\left[\D{0}{}_x \frac{1}{x}f\right]^2_0\Bigg)\\
&=-p^5Y\left(-\left[\D{1}{}_x
\frac{1}{x}f\right]_0-\frac{1}{5}\left[\D{1}{}_x
\frac{1}{x}f\right]_0+\left[\D{0}{}_x
\frac{1}{x}f\right]^2_0-\frac{2}{5}\left[\D{0}{}_x
\frac{1}{x}f\right]^2_0\right)
\end{align*}  We conclude that \begin{equation}\label{ordertwo}\left[\D{1}{}_x
\frac{1}{x}f\right]_0=\dfrac{\left[\D{0}{}_x
\frac{1}{x}f\right]^2_0}{2}\end{equation} which is the promised
quadratic relation on the coefficients of $f$.  It is natural, especially in view of the next section, to generalize the Equation (\ref{ordertwo}) by conjecturing that \begin{equation}\label{conj}\Delta_s:=\left[\D{s-1}{}_x
\frac{1}{x}f\right]_0-\frac{\left[\D{0}{}_x
\frac{1}{x}f\right]^s_0}{s!}\stackrel{?}{=}0.\end{equation}  The next section will show that \emph{this is false.}

\subsection{Fourth column}
At this point we perform the last computation, namely we look at
$(Fr(\omega),\omega)_0$.  It is impossible to derive its value from
general considerations of the kind considered in \cite{ksv}.
The vanishing of this last matrix entry is equivalent to the case $s=3$ of the \emph{false} formula (\ref{conj}) above. Note that we are able to use Equation
\ref{ordertwo} of the previous section.
\begin{align*}
&(Fr\,\omega,\omega)_0\\
&=-p^5Y\Bigg(\sum B_s c^0_{s+p-1}+5\sum B_i
B_s c^0_{i(s+p-1)}+
\bnm{5}{2}\sum B_i B_j B_s c^0_{ij(s+p-1)}\\
&+\bnm{5}{3}\sum B_i B_j B_k B_s c^0_{ijk(s+p-1)}\Bigg)\\
&=-p^5Y\Bigg(\sum B_s (-\D{3}{s+p-1})\\&
+5\sum B_i
B_s\left\{\frac{1}{5^3}\cdot\D{2}{i-1}\cdot\D{0}{s+p-1}
-\frac{1}{5^2}\cdot\D{1}{i-1}\cdot\D{1}{s+p-1}
+\frac{1}{5}\cdot\D{0}{i-1}\cdot\D{2}{s+p-1}\right\}\\
&+10\sum B_i B_j
B_s\Big\{\frac{1}{5^3}\cdot\D{1}{i-1}\cdot\D{0}{j-1}\cdot\D{0}{s+p-1}+\frac{1}{5^3}\cdot\D{0}{i-1}\cdot\D{1}{j-1}\cdot\D{0}{s+p-1}
\\
&-\frac{1}{5^2}\cdot\D{0}{i-1}\cdot\D{0}{j-1}\cdot\D{1}{s+p-1}\Big\}\\
&+10\sum B_i B_j B_k B_s \frac{1}{5^3}\cdot\D{0}{i-1}\cdot\D{0}{j-1}\cdot\D{0}{k-1}\cdot\D{0}{s+p-1}\Bigg)\\
&=-p^5Y\Bigg(-\left[\D{3}{}_x x^{p-1}f\right]_0
+\frac{1}{5^2}\left[\D{2}{}_x\frac{1}{x}f\right]_0\left[\D{0}{}_x
x^{p-1}f\right]_0-\frac{1}{5}\left[\D{1}{}_x
\frac{1}{x}f\right]_0\left[\D{1}{}_x x^{p-1}f\right]_0\\
&+\left[\D{0}{}_x \frac{1}{x}f\right]_0\left[\D{2}{}_x
x^{p-1}f\right]_0+\frac{2}{5^2}\left[\D{1}{}_x
\frac{1}{x}f\right]_0\left[\D{0}{}_x
\frac{1}{x}f\right]_0\left[\D{0}{}_x
x^{p-1}f\right]_0\\
&+\frac{2}{5^2}\left[\D{0}{}_x \frac{1}{x}f\right]_0\left[\D{1}{}_x
\frac{1}{x}f\right]_0\left[\D{0}{}_x
x^{p-1}f\right]_0-\frac{2}{5}\left[\D{0}{}_x
\frac{1}{x}f\right]^2_0\left[\D{1}{}_x
x^{p-1}f\right]_0\\
&+\frac{2}{5^2}\left[\D{0}{}_x
\frac{1}{x}f\right]^3_0\left[\D{0}{}_x
x^{p-1}f\right]_0\Bigg)\\
&=-p^5Y\Bigg(\left[\D{2}{}_x
\frac{1}{x}f\right]_0-\frac{1}{5^2}\left[\D{2}{}_x
\frac{1}{x}f\right]_0+\frac{1}{5}\left[\D{1}{}_x
\frac{1}{x}f\right]_0\left[\D{0}{}_x \frac{1}{x}f\right]_0\\
&-\left[\D{0}{}_x \frac{1}{x}f\right]_0\left[\D{1}{}_x
\frac{1}{x}f\right]_0-\frac{2}{5^2}\left[\D{0}{}_x
\frac{1}{x}f\right]_0\left[\D{1}{}_x
\frac{1}{x}f\right]_0-\frac{2}{5^2}\left[\D{1}{}_x
\frac{1}{x}f\right]_0\left[\D{0}{}_x \frac{1}{x}f\right]_0\\
&+\frac{2}{5}\left[\D{0}{}_x
\frac{1}{x}f\right]^3_0-\frac{2}{5^2}\left[\D{0}{}_x
\frac{1}{x}f\right]^3_0\Bigg)\\
&=-p^5Y\Bigg(\frac{24}{5^2}\left[\D{2}{}_x
\frac{1}{x}f\right]_0-\frac{24}{5^2}\left[\D{1}{}_x
\frac{1}{x}f\right]_0\left[\D{0}{}_x
\frac{1}{x}f\right]_0+\frac{8}{5^2}\left[\D{0}{}_x
\frac{1}{x}f\right]^3_0\Bigg)\\
&=-p^5Y\left(\frac{24}{5^2}\left[\D{2}{}_x
\frac{1}{x}f\right]_0-\frac{4}{5^2}\left[\D{0}{}_x
\frac{1}{x}f\right]^3_0\right).
\end{align*}  Recall that  $\Delta_3$ denotes the difference
$$\Delta_3=\left[\D{2}{}_x
\frac{1}{x}f\right]_0-\frac{\left[\D{0}{}_x
\frac{1}{x}f\right]^3_0}{3!}$$ we see that in the \emph{standard}
convention, the coefficient of $\delta^3\omega$ in $Fr\,\omega$ is
$$p^3 \frac{24}{5^2} \Delta_3.$$

Contrary to the case of the third column where a suggestively
similar expression vanishes, $\Delta_3$ is not $0$ as can be checked by
a computer and seems in fact to be, as is mentioned in the introduction, a rational multiple of $\zeta_p(3)$.

It is certainly possible, though outside the scope of this paper, to perform these same calculations for the case of the Calabi-Yau $5$-fold (and higher) family $$\lambda(x_0^7+x_1^7+x_2^7+x_3^7+x_4^7+x_5^7+x_6^7)+x_0 x_1 x_2 x_3 x_4 x_5 x_6=0.$$  As we have seen above, the calculations of the extra two columns will get progressively worse.  We expect that higher $\Delta_s$'s will make an appearance which will make it possible to test further conjectures regarding their value.  These conjectures will arise from certain motivic considerations and will be testable via a computer calculation similar to the case of $\Delta_3$.

\medskip
\noindent{\bf Acknowledgments.} We are indebted to A. Schwarz who persuaded us to attempt this computation and encouraged us along the way while providing much appreciated comments and advice. We would like to thank M. Kontsevich for useful discussions  and V. Vologodsky for helpful communications. Thanks are also due to P. Will for his assistance with Maple and P. Dragon for his computer proof of the non-vanishing.  We appreciate the hospitality of IH\'{E}S, MPIM Bonn and
University of Waterloo where parts of this paper were written.

\medskip
\noindent Institut des Hautes \'{E}tudes Scientifiques, Bures-sur-Yvette, France
\newline \emph{E-mail address}:
\textbf{shapiro@ihes.fr}

\end{document}